\documentclass[reqno,oneside,12pt]{amsart}
\usepackage{
  amsmath,
  amssymb,
  amsthm,
  geometry,
  paralist,
  tikz,
  tikz-cd,
  verbatim,
  relsize, 
  bbm, 
  dsfont,
  mathtools,
  xifthen, 
  mathrsfs, 
  calligra, 
  enumitem,
}

\usepackage[parfill]{parskip}   
\DeclareMathOperator{\rep}{rep}
\DeclareMathOperator{\GL}{GL}

\usepackage[T1]{fontenc}
\usepackage[all,cmtip]{xy}
\usepackage{caption}
\usepackage{subcaption}
\usetikzlibrary{angles}

\renewcommand\labelenumi{(\roman{enumi})}
\renewcommand\theenumi\labelenumi

\newcommand{\unit}{\mathds{1}}

\RequirePackage{ifpdf}
\ifpdf
  \IfFileExists{pdfsync.sty}{\RequirePackage{pdfsync}}{}
  \RequirePackage[pdftex,
   colorlinks = true,
   urlcolor = myblue, 
   citecolor = mygreen, 
   linkcolor = myred, 
   bookmarksopen=true,
   unicode, psdextra]{hyperref}
\else
  \RequirePackage[hypertex, unicode, psdextra]{hyperref}
\fi
\hypersetup{
    colorlinks=true,
    linkcolor=blue,
    filecolor=magenta,      
    urlcolor=cyan,
    citecolor = blue
} 

\RequirePackage{ae, aecompl, aeguill} 

\usepackage{quiver}

\usepackage{cleveref} 

\usepackage{adjustbox}

\tikzcdset{
	arrows = crossing over
}

\usepackage[maxbibnames=99,
			maxalphanames=10, 
			style=alphabetic,
            url=false]{biblatex}
\usepackage{xurl}

\usepackage{dynkin-diagrams}

\usepackage[boxsize=.5em]{ytableau}

\usetikzlibrary{arrows.meta} 

\makeatletter
\providecommand\@dotsep{5}
\makeatother

\newcommand{\<}{\langle}
\renewcommand{\>}{\rangle}

\DeclareMathOperator{\id}{id}

\DeclareMathOperator{\Hom}{Hom}
\DeclareMathOperator{\iHom}{\mathscr{H}\text{\kern -3pt {\calligra\large om}}\,}
\DeclareMathOperator{\rad}{rad}

\DeclareMathOperator{\iEnd}{\mathscr{E}\text{\kern -3pt {\calligra\large nd}}\,}
\DeclareMathOperator{\Ext}{Ext}
\DeclareMathOperator{\iExt}{\mathscr{E}\text{\kern -3pt {\calligra\large xt}}\,\,}

\DeclareMathOperator{\proj}{\mathsf{proj}}
\newcommand{\Cproj}[1]{\ifthenelse{\isempty{#1}}{\cC\text{-}\proj}{#1\text{-}\proj}}

\DeclareMathOperator{\inj}{\mathsf{inj}}
\newcommand{\Cinj}[1]{\ifthenelse{\isempty{#1}}{\cC\text{-}\inj}{#1\text{-}\inj}}

\newcommand{\lmod}[1]{{#1}\text{-}\mod}

\newcommand{\leftdual}[1]{#1^*}
\newcommand{\rightdual}[1]{{}^*\!#1}

\makeatletter
\let\@tl\triangleleft
\let\@tr\triangleright
\newcommand*{\@smallTriangle}[2]{\vcenter{\hbox{\scalebox{0.75}{\ensuremath{#1#2}}}}}
\newcommand*{\@medTriangle}[2]{\vcenter{\hbox{\scalebox{0.90}{\ensuremath{#1#2}}}}}
\newcommand*{\lact}{\mathbin{\mathpalette\@smallTriangle\@tr}}
\newcommand*{\ract}{\mathbin{\mathpalette\@smallTriangle\@tl}}
\let\@btl\blacktriangleleft
\let\@btr\blacktriangleright
\newcommand*{\blact}{\mathbin{\mathpalette\@medTriangle\@btr}}
\newcommand*{\bract}{\mathbin{\mathpalette\@medTriangle\@btl}}
\renewcommand*{\triangleright}{\lact}
\renewcommand*{\triangleleft}{\ract}
\renewcommand*{\blacktriangleright}{\blact}
\renewcommand*{\blacktriangleleft}{\bract}

\newcommand{\cP}{\mathcal{P}}

\newcommand{\cC}{\mathcal{C}}
\newcommand{\cF}{\mathcal{F}}
\newcommand{\cM}{\mathcal{M}}
\newcommand{\cN}{\mathcal{N}}
\newcommand{\cA}{\mathcal{A}}

\renewcommand{\vec}{\mathsf{vec}}

\renewcommand{\mod}{\mathsf{mod}}

\numberwithin{equation}{section}
\newtheorem{Theorem}[equation]{Theorem}
\newtheorem{Proposition}[equation]{Proposition}
\newtheorem{Lemma}[equation]{Lemma}

\theoremstyle{definition}
\newtheorem{Remark}[equation]{Remark}
\newtheorem{Example}[equation]{Example}

\newtheorem{eg}[equation]{Example}
\newenvironment{example}[1][]{\begin{eg}[#1] \pushQED{\qed}}{\popQED \end{eg}}
\newtheorem{Definition}[equation]{Definition}

\crefname{lemma}{Lemma}{Lemma}
  \crefname{corollary}{Corollary}{Corollary}
  \crefname{theorem}{Theorem}{Theorem}
  \crefname{definition}{Definition}{Definition}
   \crefname{proposition}{Proposition}{Proposition}
\crefname{question}{Question}{Question}
 \crefname{section}{Section}{Section} 
   \crefname{construction}{Construction}{Construction}
   \crefname{generalization}{Generalization}{Generalization}
  \crefname{construction}{Construction}{Construction}
  \crefname{notation}{Notation}{Notation}
   \crefname{example}{Example}{Example}
  \crefname{remark}{Remark}{Remark}
  \crefname{fact}{Fact}{Fact}
  \crefname{conjecture}{Conjecture}{Conjecture}
  \crefname{motivation}{Motivation}{Motivation}  
  \crefname{figure}{Figure}{Figure}  
  \crefname{assumption}{Assumption}{Assumption}

\title[Rigid and $\tau$-locally free modules]{Rigid and $\tau$-locally free modules via Taft algebras}
\author{Edmund Heng}
\address{E.~Heng: School of Mathematics and Statistics, University of Sydney, Australia}
\email{edmund.heng@sydney.edu.au}

\author{Calvin Pfeifer}
\address{C.~Pfeifer: Mathematical Institute, University of Cologne, Weyertal 86-90, 50931 Köln, Germany}
\email{cpfeifer@math.uni-koeln.de}

\date{}

\begin{document}

\maketitle

\begin{abstract}
    We show that rigid modules are $\tau$-locally free for a large family of 1-Iwanaga--Gorenstein algebras. 
    This family includes many cases of the algebras introduced by Geiss--Leclerc--Schr\"oer in their {\it Inventiones} paper ``Quivers with relations for symmetrizable Cartan matrices I: Foundations'', hence answering a question posed by Geiss--Leclerc--Schr\"oer for many new cases.
    Our result is achieved by introducing coactions of Taft algebras and applying the recent theory developed by the first-named author and Coulembier.
\end{abstract}

\tableofcontents

\section{Introduction}\label{sec:intro}

Throughout, we fix an algebraically closed field $\Bbbk = \overline{\Bbbk}$; soon we will impose also the existence of certain primitive roots of unity in $\Bbbk$.
In this work, we will be concerned with a class of algebras associated to triples $(Q,R,n)$, where
\begin{enumerate}
    \item $Q = (Q_0,Q_1,s,t)$ is a finite acyclic quiver;
    \item $R = (R_0,R_1)$ is a subquiver\footnote{By a \emph{subquiver} of a quiver $Q$ we mean a tuple $R=(R_0,R_1)$ with $R_0\subseteq Q_0$ and $R_1\subseteq Q_1$ such that $s(\alpha),t(\alpha) \in R_0$ for all $\alpha\in R_1$.} of $Q$; and
    \item $n\in \mathbb{N}$ is a positive integer.
\end{enumerate}
We denote by $Q^\varepsilon$ the quiver obtained from $Q$ by adding a loop $\varepsilon_i$ to each vertex $i \in Q_0$. 
We write $\Bbbk Q^{\varepsilon}$ for the path algebra of the quiver $Q^{\varepsilon}$.
A basis of $\Bbbk Q^\varepsilon$ is given by all paths in $Q^{\varepsilon}$, and multiplication in $\Bbbk Q^\varepsilon$ is defined on basis elements by concatenation of paths if possible and $0$ otherwise.
We follow the convention to read and concatenate paths from right to left, the same way we compose our maps.

\begin{Definition}\label{def:H(Q,R,n)}
    Let $(Q,R,n)$ be a triple as above. 
    We define the factor algebra
    \[H(Q,R,n) \coloneqq \dot{H}(Q,R_0,n) / I(Q,R_1)\]
    where 
    \[\dot{H}(Q,R_0,n) \coloneqq \Bbbk Q^\varepsilon / \< \varepsilon_i, \varepsilon_j^n \mid i\in Q_0\setminus R_0, j\in R_0\>\]
    and the ideal $I(Q,R_1)\subseteq \dot{H}(Q,R_0,n)$ is generated by the commutativity relations $$\varepsilon_{t(\alpha)}\alpha = \alpha\varepsilon_{s(\alpha)}$$ for all $\alpha \in R_1$.
\end{Definition}

Note that $\dot{H}(Q,R_0,n) = H(Q,R',n)$ with $R'_0 = R_0$ and $R'_1 = \emptyset$.

Most importantly, our algebras $H(Q,R,n)$ include some of the algebras $H(C,D,\Omega)$ introduced by Geiss--Leclerc--Schr\"oer in \cite{GLS_quiversI}
which are associated to a generalised Cartan matrix $C$, a symmetriser $D$, and an acyclic orientation $\Omega$.
More precisely, $H(C,D,\Omega)$ is isomorphic to $H(Q,R,n)$ for some triple $(Q,R,n)$ if and only if the symmetriser $D=\textup{diag}(c_i)$ satisfies $c_i \in \{1,n\}$ for all $i$.
This includes many of the finite and affine Dynkin cases. Namely, all those $H(C,D,\Omega)$ with
\begin{enumerate}
    \item $C$ of simply-laced finite or affine Dynkin type and $D$ any symmetriser; or
    \item $C$ of non-simply-laced finite or affine Dynkin type and $D$ a minimal symmetriser, except those of type $\mathbb{BC}_k$ for $k\ge 2$.
\end{enumerate}
In turn, $H(Q,R,n)$ is isomorphic to $H(C,D,\Omega)$ for some triple $(C,D,\Omega)$ if and only if $R$ is a full\footnote{We say that a subquiver $R$ of $Q$ is \emph{full} if $\alpha\in R_1$ for all $\alpha\in Q_1$ with $s(\alpha),t(\alpha)\in R_0$.} subquiver of $Q$.
Like Geiss--Leclerc--Schr\"oer's algebras, all our algebras are 1-Iwanaga--Gorenstein and isomorphic to tensor algebras of species of algebras in the sense of \cite{Kulshammer_prospecies} (see Section \ref{sec:species}).

To state our main theorem, note that a module $M$ over $H(Q,R,n)$ is the same data as giving for each $i\in Q_0$ a module $M_i$ over a truncated polynomial algebra 
$$H_i = 
\begin{cases}
    B(n) \coloneqq \Bbbk[\varepsilon]/\<\varepsilon^{n}\> & \text{if $i\in R_0$;} \\
    \Bbbk & \text{otherwise;}
\end{cases}$$
and for each arrow $\alpha\in Q_1$ a linear map $M_\alpha\colon M_{s(\alpha)} \to M_{t(\alpha)}$ which is required to be a homomorphism of $B(n)$-modules whenever $\alpha\in R_1$. 
Following \cite{GLS_quiversI}, we say that a module $M$ is \emph{locally free} if $M_i$ is a free module over $H_i$ for all $i \in Q_0$; we say that $M$ is \emph{$\tau$-locally free} if $\tau^k(M)$ is locally free for all $k\in\mathbb{Z}$ ($\tau$ denotes the Auslander--Reiten translate).
Our main theorem is the following
\begin{Theorem}[=Theorem~\ref{thm:mainthm}]
    Any rigid\footnote{This means that $\Ext^1(M,M)=0$; not to be confused with ``rigid'' often used in the context of monoidal/tensor categories; see Remark \ref{rmk:rigid}.} module over $H(Q,R,n)$ is $\tau$-locally free.
\end{Theorem}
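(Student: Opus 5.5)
We will realise $A \coloneqq H(Q,R,n)$ as an algebra with a coaction of a Taft algebra $T_n$ (equivalently, $\lmod{A}$ as a module category over the finite tensor category $\mathcal{T} \coloneqq \lmod{T_n^*}$, itself a Taft algebra). Then we will use the theory of the first-named author and Coulembier: for an exact module category $\cM$ over a finite tensor category, rigid objects are stable under the action. Fix a primitive $n$-th root of unity $q\in\Bbbk$. The Taft algebra $T_n$ is generated by a grouplike $g$ with $g^n=1$ and a $(g,1)$-skew primitive $x$ with $x^n=0$ and $gx=qxg$.

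\textbf{Step 1.} Define a $T_n$-comodule algebra structure on $A$, or dually a $T_n$-module algebra structure.
\begin{itemize}
\item Let $g$ act on each path by $q^{\ell}$, where $\ell$ is the number of loops $\varepsilon$ it contains (a grading).
\item Let $x$ act as a skew derivation with $x(e_i)=0$, $x(\alpha)=0$ and $x(\varepsilon_i)=e_i$ for $i\in R_0$.
\end{itemize}
We must check that this is compatible with the relations:
\begin{itemize}
\item $x^{\cdot}(\varepsilon^n)=(1+q+\dots+q^{n-1})\varepsilon^{n-1}=0$, using $q$ primitive.
\item For $\alpha\in R_1$, $x(\varepsilon\alpha-\alpha\varepsilon)=\alpha-\alpha=0$, using the twisted Leibniz rule (the sign and twist must be matched carefully).
\end{itemize}
This is why $R_1$ must consist of arrows between vertices of $R_0$. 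On a module, the resulting action of $\mathcal{T}$ is effectively $M\mapsto$ the module obtained by inducing/coinducing along $B(n)$ at vertices of $R_0$.

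\textbf{Step 2.} Check that $\lmod{A}$ is an exact $\mathcal{T}$-module category, meaning projective$\,\triangleright\,$anything is projective. The key computation is that the projective generator $P$ of $\mathcal{T}$ acts as $P\triangleright M \cong A\otimes_{A_0} M$-like objects, which are projective, or at least locally free. More importantly, we must identify the \emph{locally free} $A$-modules as exactly those $M$ that are direct summands of objects $P\triangleright N$, or equivalently those whose restriction to the Taft-invariant subalgebra is projective. For the truncated polynomial ring $B(n)$ this is the statement that a $B(n)$-module is free iff it is relatively projective with respect to the Taft action, which reduces to rank-one $T_n$ computations.

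\textbf{Step 3.} Conclude. The cited theorem says a rigid $M$ lies in the image of the action of projectives, or, in the other formulation, that $\Ext^1(M,M)=0$ forces $M$ to be "$\mathcal{T}$-projective", since a non-projective $X\in\mathcal{T}$ yields a nonsplit self-extension $X\triangleright M$ of $M$. Combined with Step 2, this gives that $M$ is locally free. Since $A$ is $1$-Iwanaga--Gorenstein and the $\mathcal{T}$-action commutes with Serre functors / Nakayama functors (the action is by exact module functors, so $\tau$ intertwines up to twisting by the Nakayama automorphism of $\mathcal{T}$), $\tau^k(M)$ is again rigid whenever it is nonzero; for $\tau$ this uses the Auslander–Reiten formula $\Ext^1(\tau M,\tau M)\cong D\overline{\Hom}(\tau M,\tau^2 M)$ together with locally free modules having projective dimension at most one. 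We would instead argue directly that $\tau$ commutes with the $\mathcal{T}$-action and that $\tau$ preserves the class of $\mathcal{T}$-projective objects.

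\textbf{Main obstacle.} The main obstacle is Step 2: matching "locally free" with the categorical notion of relative projectivity produced by the Heng–Coulembier theory, and checking exactness of the module category.
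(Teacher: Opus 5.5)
Your Step~1 is essentially the paper's coaction in dual form. (One correction: with $x\cdot\varepsilon_i=e_i$, the relation $gx=qxg$ forces $g\cdot\varepsilon_i=q^{-1}\varepsilon_i$, not $q\,\varepsilon_i$.)

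The genuine gap is Step~3. No general statement of the form ``a rigid object of a $\cC$-module category, exact or not, is a summand of some $P\lact M$ with $P\in\proj(\cC)$'' can hold. In Example~\ref{eg:rigidnotinPM}, $\cM=\cC=\lmod{T_2(q)}$ is exact and its simple objects are rigid, yet $\cP\cM=\proj(\cM)$. Worse, your Steps~2 and~3 together contradict the theorem you want. In any exact module category $\cP\cM\subseteq\proj(\cM)$, so the two steps would make every rigid $H$-module projective. This is false already for $R_0=\emptyset$ and $Q$ with an arrow: then $H=\Bbbk Q$ has rigid non-projective simples, and $P\lact M\cong M^{\oplus\dim P}$ shows that $\lmod H$ is not exact. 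Your heuristic about $X\lact M$ also runs the wrong way. A composition series of $X\in\cC_n$ gives extensions among the objects $S\lact M$ with $S$ simple, and these are self-extensions of $M$ only once you know $S\lact M\cong M$. Rigidity then forces them to \emph{split}, which is exactly how the paper gets $P\lact M\cong M^{\oplus\dim P}$ via Lemma~\ref{lem:orbit-vanishing-Ext-splits}.

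So the missing idea is Lemma~\ref{lem:simple_action_on_rigids}: $S_\ell\lact M\cong M$ for every rigid $M$. This is special to this action and is proved geometrically. The functor $S_\ell\lact-$ twists each $M_i$ with $i\in R_0$ by the automorphism $\varepsilon\mapsto q^{-\ell}\varepsilon$ of $B(n)$ and leaves the arrow maps unchanged. Hence $M$ and $S_\ell\lact M$ have isomorphic restrictions to $\bar H=\prod_i H_i$ and lie in the same irreducible stratum $\mathcal S(\mathcal O)\subseteq\rep(H,\mathbf d)$. Both are rigid, so their orbits are open by Voigt's lemma; since both meet the irreducible stratum, they must intersect.

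Separately, the ``exactly'' in your Step~2 is false and not needed; only the inclusion of $\cP\cM$ in the locally free modules holds. Take $Q\colon 1\xleftarrow{\alpha}2$, $R=Q$, $n=2$, and the locally free module with $M_1=M_2=B(2)$ and $M_\alpha$ given by multiplication by $\varepsilon$. Its $\tau M$ is the simple module at vertex~$1$. So it cannot lie in $\cP\cM$, which is $\tau$-closed and consists of locally free modules.

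Finally, for the $\tau$-part just cite Theorem~\ref{thm:ARtranslate-closed}. Your alternative claim that $\tau$ preserves rigidity is unjustified and unnecessary.
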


This answers -- for those $H(C,D,\Omega)$ which belong to our family $H(Q,R,n)$ -- the question posed by Geiss--Leclerc--Schr\"oer in \cite[after Corollary~6.3]{GLS_rigidSchur}. 
Their question is motivated as part of their extensive program to extend many of the spectacular connections of Lie theory beyond the representation theory of hereditary algebras, which allow them to work over algebraically closed fields.
An instance is a bijective correspondence between real Schur roots (defined using non-crossing partitions or the Cartan lattice) of the Kac--Moody algebra associated to the Cartan matrix $C$ and isomorphism classes of indecomposable rigid modules over $H(C,D,\Omega)$ that are locally free \cite[Theorem 1.2(a)]{GLS_rigidSchur}.
For hereditary algebras, a bijection between real Schur roots and rigid modules was known (see \cite{CB_ex-seq-quiver, Ringel_ex-seq-Artin, HuberyKrause}), though the non-symmetric types would require working over non-algebraically closed fields (that depend on the Cartan matrix).
This raises the natural question whether one can drop the `locally free' assumption on rigid modules in Geiss--Leclerc--Schr\"oer's correspondence above to resemble the hereditary situation even more.

A lemma due to Demonet \cite[Lemma 6.2]{GLS_rigidSchur} shows that all $\tau$-rigid\footnote{This means that $\Hom(M,\tau(M)) = 0$. In particular, every $\tau$-rigid module is rigid by the Auslander--Reiten formula.} modules over $H(C,D,\Omega)$ are locally free. 
Demonet's proof also works for our algebras $H(Q,R,n)$, but does not apply to rigid modules. 
We note that once one shows that rigid modules over $H(C,D,\Omega)$ are locally free, the fact that they are $\tau$-locally free follows from \cite[Proposition 11.4]{GLS_quiversI}.
Nonetheless, we emphasise that our proof is independent of this result, and applies to all algebras $H(Q,R,n)$; we will briefly discuss our proof strategy in the next subsection.
Finally, a geometric argument (over an algebraically closed field $\Bbbk$ of characteristic $0$) shows that the question of Geiss--Leclerc--Schr\"oer has an affirmative answer for all algebras $H(C,D,\Omega)$ with symmetriser $D = \textup{diag}(c_i)$ such that $\gcd(c_i,c_j) = 1$ for all $i,j\in Q_0$; see \cite[Corollary 3.16]{Pfeifer}. 
We note that the same geometric argument applies to the algebras $\dot{H}(Q,R_0,n)$.
As far as we know, for all other cases our theorem is new.

\subsection{Proof overview}
Our main observation is that $\cM \coloneqq \lmod{H(Q,R,n)}$ comes equipped with a natural action of the finite tensor category $\cC_n \coloneqq \lmod{T_n(q)}$, where $T_n(q)$ is a finite dimensional Hopf algebra known as the Taft algebra (Definition \ref{defn:Taft}). In other words, there is a natural way to equip $\cM$ with the structure of a $\cC_n$-module category (Definition~\ref{defn:modulecat}), which allows us to apply the theory developed by the first-named author and Coulembier in \cite{CH_eqAuscor-findim}.

For any $\cC$-module category $\cM$, the authors in loc.\ cit.\ introduce the subcategory $\cP\cM \subseteq \cM$ consisting of all modules that are direct summands of modules $P\lact M$ for $M\in \cM$ and $P \in \proj(\cC)$.
It was shown that this subcategory satisfies numerous remarkable properties, one of which is being closed under the Auslander--Reiten translates $\tau^k$ for all $k\in \mathbb{Z}$.

In our setting, it follows from the definition of the $\cC_n$-action that all modules in $\cP\cM$ are locally free -- hence the general result above implies that they are all $\tau$-locally free.
Thus it suffices to show that given a rigid module $M$, $P\lact M$ contains $M$ as a summand for some\footnote{In fact, $\cP\cM$ is functorially finite and $P\lact M$ can be chosen to be a right or left $\cP\cM$-approximation of $M$.} projective $P \in \proj(\cC_n)$ -- this is shown in Theorem~\ref{thm:mainthm}.
We note that this hinges on a particular property of the $\cC_n$-action on $\cM$ (see Lemma~\ref{lem:orbit-vanishing-Ext-splits} and \ref{lem:simple_action_on_rigids}), and it is not true for general $\cC$-module categories that $\cP\cM$ contains all rigid modules; see Example \ref{eg:rigidnotinPM}.
Nonetheless, we hope that our result can be generalised to all algebras $H(C,D,\Omega)$ considered by Geiss--Leclerc--Schr\"oer in \cite{GLS_quiversI} via other appropriate actions of finite tensor categories $\cC$.

\subsection*{Notation and conventions}
An algebra will always mean an algebra over $\Bbbk$.
Given an algebra $A$, we will use $\lmod A$ to denote the category of finitely generated \emph{left} $A$-modules.
We say that an abelian category $\cA$ is \emph{finite} if $\cA \cong \lmod A$ for some finite dimensional algebra $A$.
Given an abelian category $\cA$, we use $\proj(\cA)$ to denote its category of projective objects.

\subsection*{Acknowledgments}
EH is immensely grateful for the invitation to visit Cologne by Sibylle Schroll, which resulted in this collaboration. We are also deeply grateful to Sibylle Schroll for many insightful discussions.
EH was supported by the Australian Research Council grant DP230100654 and the visit was partially supported by the ``Overseas Conference and Research Visit Support Program'' offered by the Faculty of Science, University of Sydney.

\subsection*{Statement on AI usage} The authors declare that no artificial intelligence was used in this work.

\section{Preliminaries: tensor categories and their module categories}
\subsection{Tensor categories from Hopf algebras}
\begin{Definition}[\protect{\cite[Definition 4.1.1 \& Chapter 6]{EGNO}}]
    A \emph{finite tensor category} $\cC$ is a finite abelian category $\cC$ equipped with a monoidal structure $-\otimes -: \cC \times \cC \to \cC$ such that the monoidal unit $\unit$ is simple and every object $X \in \cC$ has a left dual $\leftdual{X}$ and right dual $\rightdual{X}$.
\end{Definition}
\begin{Remark}\label{rmk:rigid}
    We warn the reader that in the theory of monoidal categories, the word ``rigid'' is commonly used to mean an object in a monoidal category that has both left and right duals \cite[Definition 2.10.11]{EGNO} -- this is \emph{not} the same as a rigid module (in the sense that $\Ext^1(M,M)=0$).
\end{Remark}
We note that in a finite tensor category $\cC$, both left and right duals of a projective object remains projective, hence all projectives are injectives, and vice versa \cite[Proposition 6.1.3]{EGNO}.
In other words, $\cC$ is a quasi-Frobenius category, i.e.\ $\cC$ is equivalent to the category of modules over a finite dimensional self-injective algebra.

The finite tensor categories of interest in this paper will come from finite dimensional Hopf algebras, whose definition we now recall:
\begin{Definition}[\protect{\cite[Definition 5.3.10]{EGNO}}]
    A \emph{bialgebra} $R$ is a vector space $R$ that is simultaneously equipped with the structure of an algebra $(R,\mu:R \otimes_\Bbbk R \to R, \iota:\Bbbk \to R)$ and a coalgebra $(R,\Delta: R \to R\otimes_\Bbbk R, \epsilon: R \to \Bbbk)$, such that the comultiplication $\Delta$ and counit $\epsilon$ are both maps of algebras.
    A \emph{Hopf algebra} is a bialgebra $R$ that comes with an invertible linear map $S: R \to R$ known as the \emph{antipode}, satisfying
    \[
    \mu \circ(\id \otimes S)\circ \Delta = i \circ \epsilon = \mu \circ(S \otimes \id)\circ \Delta.
    \]
\end{Definition}
Note that it follows from the condition above that $S:R \to R$ is an antihomomorphism of algebras (and coalgebras).
Moreover, the antipode of a bialgebra is uniquely determined if it exists \cite[Proposition 5.3.5]{EGNO}.

\begin{Definition}
    Let $R$ be a finite dimensional Hopf algebra. Then $\lmod R$ can be equipped with a monoidal structure defined as follows. Given two $R$-modules $X$ and $Y$, we define $X \otimes Y$ to be the $R$-module with underlying object given by $X \otimes_\Bbbk Y$ and $R$-module structure defined by 
    \[
    r\cdot (x\otimes y) = \sum_{i} (r_{1,i}\cdot x) \otimes (r_{2,i}\cdot y)
    \]
    where $\Delta(r) = \sum_i r_{1,i}\otimes r_{2,i}$.
    There is a distinguished simple object $\unit \coloneqq \Bbbk$ with $R$-module structure induced by counit $\epsilon:R \to \Bbbk$, which is the monoidal unit of $\lmod R$.
    Moreover, $\lmod R$ is a finite tensor category with respect to this monoidal structure \cite[Corollary 5.3.7]{EGNO}. Given an $R$-module $X$, its left dual $\leftdual{X}$ and right dual $\rightdual{X}$ both have underlying object given by the $\Bbbk$-linear dual $D(X)$, with $R$-module structure defined respectively by:
    \[
    (a\cdot f)(x) = f(S(a)\cdot x); \qquad (a\cdot g)(x) =g(S^{-1}(a)\cdot x).
    \]
\end{Definition}

\subsection{Module categories from Hopf-comodule algebras}
\begin{Definition}[\protect{\cite[Definition 7.3.1]{EGNO}}]\label{defn:modulecat}
    A (left) \emph{module category} $\cM$ over a finite tensor category $\cC$, or a (left) \emph{$\cC$-module category}, is a finite abelian category $\cM$ equipped with a bifunctor $-\lact - : \cC \times \cM \to \cM$ that is exact in both variables\footnote{Exactness in the second variable is implied by existence of (tensor) duals in $\cC$; the functor $X \lact -: \cM \to \cM$ has left adjoint $\leftdual{X}\lact -$ and right adjoint $\rightdual{X}\lact -$, hence must be exact.}, satisfying the associativity condition and unitality condition (see \cite[Definition 7.1.1]{EGNO}).
\end{Definition}

We will be mainly interested in module categories over $\cC = \lmod R$ for a finite dimensional Hopf algebra $R$, which can be described through the following notion.
\begin{Definition}[\protect{\cite[\S1.5]{AM_modulecatHopf}}]
    Let $R$ be a finite dimensional Hopf algebra and set $\cC = \lmod R$.
    A finite dimensional algebra $A$ is said to be a (left) \emph{$R$-comodule algebra} if it is an $R$-comodule whose comodule structure map $\lambda:A \to R\otimes_\Bbbk A$ is a map of algebras.
    Given a finite dimensional $R$-comodule algebra $A$, $\lmod A$ is naturally a $\cC$-module category: for $X \in \cC$ and $M \in \lmod A$, we define $X \lact M$ to be the $A$-module with underlying object $X \otimes_\Bbbk M$ and $A$-module structure given by
    \begin{equation}\label{eq:modulecatHopf}
        a \cdot (x \otimes m) = \sum_i (a_{0,i}\cdot x) \otimes (a_{1,i}\cdot m),
    \end{equation}
    where $\lambda(a) = \sum_i a_{0,i} \otimes a_{1,i}$.
\end{Definition}

\begin{Remark}\label{rmk:two-natural-comodule-algebra}
    Given a Hopf algebra $R$, there are (at least) two natural $R$-comodule algebras.
    The first is $R$ itself, using the comultiplication $\Delta: R \to R\otimes_\Bbbk R$ as $\lambda$; the other is $\Bbbk$, using instead the unit map $\iota: \Bbbk \to R \cong R\otimes_\Bbbk \Bbbk$ as $\lambda$.
    The corresponding module category structure (over $\lmod R$) on $\vec_\Bbbk=\lmod \Bbbk$ obtained is also the module category structure induced by the forget functor $\lmod R \to \vec_\Bbbk$ (which is monoidal and exact, also commonly called a \emph{fibre functor}).
\end{Remark}

\subsection{The precluster tilting subcategory \texorpdfstring{$\cP\cM$}{PM}}
Let $\cM$ be a module category over a finite tensor category $\cC$.
The following subcategory of $\cM$ introduced in \cite{CH_eqAuscor-findim} will play a crucial role in this paper.
\begin{Definition}\label{defn:PM}
    For $\cM$ a $\cC$-module category, we define $\cP\cM$ to be its smallest additive subcategory closed under taking direct summands containing $P\lact M$ for all $P \in \proj(\cC)$ and all $M \in \cM$.
\end{Definition}

It was shown in \cite{CH_eqAuscor-findim} that $\cP\cM$ satisfies numerous remarkable properties; the property that is relevant for us is the following:
\begin{Theorem}[\protect{\cite[Theorem 1.1(i)]{CH_eqAuscor-findim}}]\label{thm:ARtranslate-closed}
    $\cP\cM$ is closed under all Auslander--Reiten translates: for $M\in \cP\cM$, $\tau^k(M) \in \cP\cM$ for all $k \in \mathbb{Z}$.
\end{Theorem}

\section{The algebras $H(Q,R,n)$}\label{sec:species}

Recall the Definition \ref{def:H(Q,R,n)} of our algebras $H(Q,R,n)$ from the introduction.
It should be apparent to the experts that these algebras are tensor algebras of \emph{species of algebras} in the sense of \cite{Kulshammer_prospecies}.
In this section we will briefly recall this construction to introduce some notation, to provide a more conceptual perspective on this class of algebras, and to mention some of their fundamental properties.

For the remainder of this paper, we fix $(Q,R,n)$ and simply denote $H\coloneqq H(Q,R,n)$.
Abbreviate $B(n) := \Bbbk[\varepsilon]/\langle \varepsilon^n \rangle$, 
and for all $i\in Q_0$ and $\alpha\in Q_1$ set
\begin{align*}
    H_i := 
    \begin{cases}
        B(n) & \text{if $i\in R_0$}; \\
        \Bbbk & \text{else;}
    \end{cases}
    &&
    H_\alpha := 
    \begin{cases}
        H_{t(\alpha)} \otimes_{B(n)} H_{s(\alpha)} & \text{if $\alpha\in R_1$;}\\
        H_{t(\alpha)} \otimes_{\Bbbk} H_{s(\alpha)} & \text{else;}
    \end{cases}
\end{align*}
where we consider $H_\alpha$ as an $H_{t(\alpha)}$-$H_{s(\alpha)}$-bimodule in the natural way.
To simplify notation, we also set
\begin{align*}
    B_0 := \prod_{i\in Q_0} H_i, &&
    B_1 := \bigoplus_{\alpha\in Q_1} H_\alpha,
\end{align*}
and consider $B_1$ as a $B_0$-$B_0$-bimodule via the natural projections.
The associated \emph{tensor algebra} is then defined as 
\begin{align*}
    T(Q,R,n) := B_0 \oplus \bigoplus_{\ell \geq 1}
    (\underbrace{B_1 \otimes_{B_0} \cdots \otimes_{B_0} B_1}_{\text{$\ell$ copies of $B_1$}})
\end{align*}
with multiplication given by concatenation of elementary tensors; see \cite[Definition 2.9]{Kulshammer_prospecies} for more details.

\begin{Lemma}
    There is an isomorphism of algebras $H \cong T(Q,R,n)$.
\end{Lemma}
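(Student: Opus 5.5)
I will prove the isomorphism using the universal property of the tensor algebra $T(Q,R,n)$: an algebra map $T(Q,R,n)\to A$ is the same as an algebra map $\phi_0\colon B_0\to A$ together with a $B_0$-$B_0$-bimodule map $\phi_1\colon B_1\to A$, where $A$ is regarded as a bimodule via $\phi_0$. This gives a map $\Phi\colon T(Q,R,n)\to H$. I will then construct an inverse $\Psi\colon H\to T(Q,R,n)$ from the presentation of $H$ by generators and relations, and check that the two composites are the identity on generators.

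\textbf{Step 1: construct $\Phi$.}
\begin{enumerate}
    \item Send the idempotent $e_i\in H_i\subseteq B_0$ to the trivial path at $i$.
    \item For $i\in R_0$, send $\varepsilon\in B(n)=H_i$ to $\varepsilon_i$. This is well defined because $\varepsilon_i^n=0$ in $H$.
    \item For $i\notin R_0$, we have $H_i=\Bbbk$ and nothing more needs specifying; this is consistent because $\varepsilon_i=0$ in $H$.
    \item For $\alpha\in Q_1$, note that $H_\alpha$ is generated as an $H_{t(\alpha)}$-$H_{s(\alpha)}$-bimodule by $1\otimes 1$. Send $1\otimes1\mapsto \alpha$.
\end{enumerate}
Well-definedness of the last assignment has two cases. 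If $\alpha\notin R_1$, then $H_\alpha=H_{t(\alpha)}\otimes_\Bbbk H_{s(\alpha)}$ is free on $1\otimes 1$, so the map is automatic. If $\alpha\in R_1$, then $H_\alpha = B(n)\otimes_{B(n)}B(n)$ is the quotient of the free bimodule by $\varepsilon\otimes 1 - 1\otimes\varepsilon$. This generator maps to $\varepsilon_{t(\alpha)}\alpha-\alpha\varepsilon_{s(\alpha)}$, which vanishes in $H$ by definition of $I(Q,R_1)$.

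\textbf{Step 2: construct $\Psi$.} Define $\Psi$ on $\Bbbk Q^\varepsilon$ by:
\begin{enumerate}
    \item trivial paths $\mapsto e_i$;
    \item $\varepsilon_i\mapsto \varepsilon\in H_i$ if $i\in R_0$, and $\varepsilon_i\mapsto 0$ otherwise;
    \item $\alpha\mapsto 1\otimes 1\in H_\alpha\subseteq B_1$.
\end{enumerate}
Since $\Bbbk Q^\varepsilon$ is a path algebra, this extends uniquely to an algebra map. It kills $\varepsilon_j^n$ for $j\in R_0$, kills $\varepsilon_i$ for $i\notin R_0$, and kills the commutativity relations for $\alpha\in R_1$, the last because $\varepsilon\otimes 1=1\otimes\varepsilon$ in $H_\alpha$. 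Hence $\Psi$ descends to $H$.

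\textbf{Step 3: check the composites.}
\begin{enumerate}
    \item $\Phi\Psi$ is the identity on the generators $e_i,\varepsilon_i,\alpha$ of $H$.
    \item $\Psi\Phi$ is the identity on $B_0$ and on $B_1$. This uses that $B_0$ is generated by the $e_i$ and the $\varepsilon$'s, and that $B_1$ is generated as a bimodule by the elements $1\otimes1\in H_\alpha$.
    \item By the universal property, the tensor algebra is generated by $B_0$ and $B_1$, so $\Psi\Phi=\id$.
\end{enumerate}

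\textbf{Main obstacle.} The only delicate point is the bimodule presentation of $H_\alpha$ when $\alpha\in R_1$, together with the mixed cases where exactly one endpoint of $\alpha$ lies in $R_0$ and $\alpha\notin R_1$. In those mixed cases $H_\alpha$ is $B(n)\otimes_\Bbbk\Bbbk$ or $\Bbbk\otimes_\Bbbk B(n)$, which is again free on $1\otimes1$. So the relations defining $H_\alpha$ match exactly the relations of $H$ involving $\alpha$, namely the commutativity relations for $\alpha\in R_1$ and none otherwise. I expect this to be routine once these presentations are written out.
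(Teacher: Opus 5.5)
Your proposal is correct and takes the same approach as the paper. Your $\Psi$ is exactly the map the paper writes down: $e_i$, $\varepsilon_i$ and $\alpha$ go to the $i$th copy of $1$, to $\varepsilon$ (or $0$ when $i\notin R_0$), and to $1\otimes 1$ in position $\alpha$. Your construction of the inverse $\Phi$ via the universal property of the tensor algebra simply supplies the well-definedness and bijectivity checks that the paper calls ``straightforward.''
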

\begin{proof}
    An isomorphism is given by sending the idempotent $e_i$ of $H$ (resp. loop $\varepsilon_i$) at a vertex $i\in Q_0$ to the $i$th copy of $1$ (resp. $\varepsilon$) in $B_0$,
    and an arrow $\alpha\in Q_1$ to $1\otimes 1$ at position $\alpha$ in $B_1$.
    It is straight forward to check that this indeed determines a well-defined isomorphism of algebras. 
\end{proof}

It will be more convenient to use the description of $H$-modules given in the next remark to prove Proposition \ref{prop:Taftaction} below. 
\begin{Remark}\label{rem:H-mod}
    The category of left $H$-modules is equivalent to the category of representations of the species of algebras given by $(H_k,H_\alpha)_{k\in Q_0,\alpha\in Q_1}$; see \cite[Proposition 2.11]{Kulshammer_prospecies}.
    In more basic terms, which is also evident from the description of $H$ as a factor algebra of a path algebra, an $H$-module $M$ is determined by the following data:
    \begin{enumerate}
        \item for each $i\in Q_0$ a vector space $M_i$, which carries the structure of a $B(n)$-module for all $i\in R_0$;
        \item for each $\alpha\in Q_1$ a linear map $M_\alpha\colon M_{s(\alpha)} \to M_{t(\alpha)}$, which is required to be a homomorphism of $B(n)$-modules whenever $\alpha\in R_1$.\footnote{Technically, in a representation of a (pro-)species of algebras we have $M_\alpha: H_\alpha\otimes_{H_{s(\alpha)}} M_{s(\alpha)} \to M_{t(\alpha)}$ for each $\alpha \in Q_1$. We leave it to the reader to check that this is the same as our simplified description (using appropriate adjunctions between extension of scalars and forgetful functors).}
    \end{enumerate}
    Furthermore, a homomorphism $f\colon M\to N$ of $H$-modules is given by a linear map $f_i\colon M_i \to N_i$ for each $i\in Q_0$ such that 
    \begin{enumerate}
        \item $f_i$ is a homomorphism of $B(n)$-modules for all $i\in R_0$;
        \item $N_\alpha f_{s(\alpha)} = f_{t(\alpha)} M_\alpha$ for all $\alpha\in Q_1$.
    \end{enumerate}
\end{Remark}

As an immediate application of \cite[Proposition 3.5]{Kulshammer_prospecies} we get:

\begin{Proposition}\label{prop:1IG}
    The algebra $H := H(Q,R,n)$ is $1$-Iwanaga--Gorenstein.
    Furthermore, the following are equivalent for a module $M\in\lmod{H}$:
    \begin{enumerate}
        \item The projective dimension of $M$ as an $H$-module is at most $1$.
        \item For every $i\in Q_0$, the module $M_i$ is free as an $H_i$-module.
    \end{enumerate}
\end{Proposition}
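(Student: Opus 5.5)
The plan is to deduce Proposition~\ref{prop:1IG} from \cite[Proposition 3.5]{Kulshammer_prospecies}, using the identification $H \cong T(Q,R,n)$ from the preceding lemma and the description of $H$-modules in Remark~\ref{rem:H-mod}. That result applies to tensor algebras of species $(H_k, H_\alpha)$ over an acyclic quiver in which each $H_k$ is self-injective (here $B(n)$ or $\Bbbk$, both Frobenius) and each bimodule $H_\alpha$ is projective as a left $H_{t(\alpha)}$-module and as a right $H_{s(\alpha)}$-module. Its conclusions are that the tensor algebra is $1$-Iwanaga--Gorenstein, and that a module has projective dimension at most $1$ exactly when it is locally projective, i.e.\ when each $M_i$ is projective over $H_i$. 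So the argument consists of checking these hypotheses and then translating the conclusion.

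First I would verify the hypotheses case by case on $\alpha \in Q_1$.
\begin{itemize}
\item If $\alpha \in R_1$, then both endpoints lie in $R_0$, so $H_\alpha = B(n)\otimes_{B(n)} B(n) \cong B(n)$. This is free of rank one on each side.
\item If $\alpha\notin R_1$, then $H_\alpha = H_{t(\alpha)}\otimes_\Bbbk H_{s(\alpha)}$. This is free as a left $H_{t(\alpha)}$-module of rank $\dim H_{s(\alpha)}$, and free as a right $H_{s(\alpha)}$-module of rank $\dim H_{t(\alpha)}$.
\end{itemize}
The quiver $Q$ is acyclic by assumption, and $B(n)$ is a local self-injective algebra. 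Hence the species satisfies the hypotheses, and we conclude that $T(Q,R,n)$, and therefore $H$, is $1$-Iwanaga--Gorenstein.

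For the equivalence of (i) and (ii), I would pass through the equivalence of Remark~\ref{rem:H-mod} between $H$-modules and species representations, under which the $i$-th component is exactly $M_i$ with its $H_i$-structure. Külshammer's criterion says that $\operatorname{pd} M\le 1$ if and only if each $M_i$ is projective over $H_i$. Since $H_i$ is local (or a field), projective coincides with free, which is (ii).

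Should the cited proposition be phrased only via Gorenstein-projectives, I would prove the equivalence directly from the standard exact sequence
\[
0\to \bigoplus_{\alpha} H e_{t(\alpha)}\otimes_{H_{t(\alpha)}} H_\alpha\otimes_{H_{s(\alpha)}} M_{s(\alpha)} \to \bigoplus_i He_i\otimes_{H_i} M_i \to M\to 0
\]
for tensor algebras. The argument for each direction:
\begin{itemize}
\item If each $M_i$ is free, both terms are projective, because $H_\alpha$ is projective on both sides. This gives (ii)$\Rightarrow$(i).
\item Conversely, $e_iH$ is projective as a right $H_i$-module, by the same bimodule projectivity. Hence restriction to $H_i$ sends projective $H$-modules to projective $H_i$-modules. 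So a resolution of length $\le 1$ restricts to one of $M_i$ over $H_i$. Since $H_i$ is self-injective, finite projective dimension forces $M_i$ to be projective. This gives (i)$\Rightarrow$(ii).
\end{itemize}

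The main obstacle I expect is this last point: making sure that restriction to $H_i$ preserves projectivity, i.e.\ that $e_iH$ is right $H_i$-projective. I would check this via the path-basis description, where $e_iH$ decomposes into the summands $H_\alpha\otimes\cdots$, each free over $H_i$ by the case analysis above.
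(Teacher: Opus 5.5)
Your proposal is correct and follows the paper, which likewise obtains the statement as an immediate application of \cite[Proposition 3.5]{Kulshammer_prospecies} via $H\cong T(Q,R,n)$; your checks that each $H_i$ is local self-injective and that each $H_\alpha$ is projective on both sides are exactly what that application needs. One small slip in your fallback argument: for restriction $N\mapsto e_iN$ to preserve projectives you need $e_iH$ to be projective as a \emph{left} $H_i$-module, not a right one; this follows from each $H_\alpha$ being left $H_{t(\alpha)}$-projective, which your case analysis already establishes.
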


\section{Coactions of Taft algebras}

Recall that we fixed $(Q,R,n)$ and $H\coloneqq H(Q,R,n)$.
From here on, we assume that $\Bbbk$ contains all primitive $n$-th roots of unity.
We first introduce the relevant finite tensor category, which we will later show acts on $\lmod H$.

\begin{Definition}[Taft algebras]\label{defn:Taft}
    Let $n \in \mathbb{N}$ and $q\in \Bbbk$ be a primitive $n$-th root of unity.
    The \emph{Taft algebra} $T_n(q) = \Bbbk\<g,x\>/\<g^n=1, x^n=0, gx=qxg\>$ is the Hopf algebra with coalgebra structure given by
    \[
    \Delta(g) = g\otimes g, \quad \Delta(x)=x\otimes 1 + g\otimes x, \qquad \epsilon(g)=1, \quad \epsilon(x)=0.
    \]
    Its (invertible) antipode is defined by $S(g)=g^{-1}$ and $S(x)=-g^{-1}x$. 
    This makes $\lmod{T_n(q)}$ into a finite tensor category, which we denote by $\cC_n$.
\end{Definition}

Let us describe some of the salient properties of $\cC_n$.
A complete set of pairwise orthogonal primitive idempotents of $T_n(q)$ is given by 
\begin{equation*}
    e_\ell \coloneqq \frac{1}{n}\sum_{j=0}^{n-1}q^{\ell j}g^j
\end{equation*}
for $0 \leq \ell < n$.
We denote the corresponding indecomposable projective and simple modules by 
\[
    P_\ell \coloneqq T_n(q) e_\ell \quad \text{and} \quad 
    S_\ell \coloneqq P_\ell/\rad(P_\ell),
\]
for all $0\leq \ell < n$,
respectively.
Note that $S_\ell = \Bbbk \bar{e}_\ell$ is $1$-dimensional,
where $\bar{e}_\ell \in S_\ell$ denotes the residue class of $e_\ell$ in $S_\ell$,
and the left $T_n(q)$-action is given by
\begin{align}\label{eq:simpleTaftmodule}
    x \cdot \bar{e}_\ell = 0
    && \text{and} &&
    g \cdot \bar{e}_\ell = q^{-\ell} \bar{e}_\ell.
\end{align}
One readily sees that 
\[
S_\ell \otimes S_{\ell'} \cong S_{\ell+\ell'}
\]
with subscripts $0\leq \ell,\ell' < n$ taken modulo $n$.
In particular, $S_0$ is the monoidal unit of $\cC_n$, which agrees with the distinguished simple module defined by the counit $\epsilon: T_2(q) \to \Bbbk$.

\subsection{Coactions of Taft algebras on truncated polynomial algebras}

To describe the coaction of Taft algebras on truncated polynomial algebras, we use the following notation. 
Let $B$ be an algebra, and $\sigma\colon B\to B$ an algebra automorphism. 
Denote by ${_\sigma}B$ the $B$-$B$-bimodule with ${_\sigma}B = B$ as a right $B$-module and left $B$-module structure given by $a\cdot b = \sigma(a)b$ for all $a\in B$ and $b\in {_\sigma}B$.

\begin{Lemma}\label{lem:truncatedpoly-exact}
    Let $B(n) \coloneqq  \Bbbk[\varepsilon]/\<\varepsilon^n\>$.
    Then $B(n)$ is a $T_n(q)$-comodule algebra via the comodule structure:
    \[
    \lambda(\varepsilon)=x\otimes 1 + g\otimes\varepsilon,
    \]
    so that $\lmod{B(n)}$ is a $\cC_n$-module category.
    \begin{enumerate}
        \item \label{item:P-action-truncatedpoly} 
        Let $M\in \lmod{B(n)}$.
        For every $0\leq \ell <n$, we have that $$P_\ell \lact M \cong B(n)^{\oplus \dim_\Bbbk(M)}$$ is a free $B(n)$-module.
        \item \label{item:S-action-truncatedpoly}
        Denote by $\sigma\colon B(n) \to B(n)$ the algebra automorphism given by $\sigma(\varepsilon) \coloneqq q^{-1} \varepsilon$.
        For each $0\leq \ell < n$, there is a natural isomorphism 
        \begin{align*}
            S_\ell\lact - \simeq {_{\sigma^\ell}}B(n) \otimes_{B(n)} -
        \end{align*}
        of endofunctors on $\lmod{B(n)}$.
    \end{enumerate}
\end{Lemma}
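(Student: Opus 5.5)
First I check that $\lambda$ is a well-defined algebra map, and then the coassociativity and counit axioms. Define $\lambda$ on $\Bbbk[\varepsilon]$ by sending $\varepsilon \mapsto x\otimes 1 + g\otimes \varepsilon$ and extending multiplicatively. To see that it factors through $B(n)$, note that $X := x\otimes 1$ and $Y := g\otimes\varepsilon$ satisfy $YX = (gx)\otimes\varepsilon = q\,(xg\otimes\varepsilon) = q\,XY$. The $q$-binomial theorem for $q$-commuting variables with $q$ a primitive $n$-th root of unity then gives
\[
(X+Y)^n = X^n + Y^n = x^n\otimes 1 + g^n\otimes\varepsilon^n = 0
\]
in $T_n(q)\otimes B(n)$. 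Coassociativity only needs checking on the generator $\varepsilon$, since both sides are algebra maps:
\begin{align*}
(\Delta\otimes\id)\lambda(\varepsilon) &= x\otimes 1\otimes 1 + g\otimes x\otimes 1 + g\otimes g\otimes \varepsilon,\\
(\id\otimes\lambda)\lambda(\varepsilon) &= x\otimes 1\otimes 1 + g\otimes(x\otimes 1 + g\otimes\varepsilon).
\end{align*}
These agree. The counit axiom holds because $\epsilon(x)=0$ and $\epsilon(g)=1$. Hence $B(n)$ is a $T_n(q)$-comodule algebra, and the preceding definition makes $\lmod{B(n)}$ a $\cC_n$-module category.

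For \ref{item:S-action-truncatedpoly}, I compute the action on $S_\ell\lact M = \Bbbk\bar e_\ell\otimes M$ using \eqref{eq:simpleTaftmodule}:
\[
\varepsilon\cdot(\bar e_\ell\otimes m) = x\bar e_\ell\otimes m + g\bar e_\ell\otimes \varepsilon m = \bar e_\ell\otimes q^{-\ell}\varepsilon m.
\]
So $S_\ell\lact M$ is $M$ with $\varepsilon$ acting by $\sigma^\ell(\varepsilon)=q^{-\ell}\varepsilon$. On the other side, ${_{\sigma^\ell}}B(n)\otimes_{B(n)} M\cong M$ via $b\otimes m\mapsto bm$, and under this identification $\varepsilon$ acts by $\sigma^\ell(\varepsilon)$ as well. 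Therefore $\bar e_\ell\otimes m\mapsto 1\otimes m$ is an isomorphism of $B(n)$-modules. It is clearly natural in $M$, since it is the identity on underlying vector spaces.

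For \ref{item:P-action-truncatedpoly}, the main obstacle is the freeness of $P_\ell\lact M$. I would argue as follows.

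Step 1 (reduce to $M$ simple). The functor $P_\ell\lact -$ is exact. Over the self-injective algebra $B(n)$, free modules are projective and hence injective, so every short exact sequence with free outer terms splits. Induction on a composition series of $M$ then reduces the claim to $M=\Bbbk$, the unique simple $B(n)$-module.

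Step 2 (the case $M=\Bbbk$). Since $\varepsilon\Bbbk=0$, the element $\varepsilon$ acts on $P_\ell\otimes\Bbbk\cong P_\ell$ by left multiplication by $x$. Now $P_\ell = T_n(q)e_\ell$ has basis $x^j e_\ell$ for $0\le j<n$, because $\{x^jg^i\}$ is a PBW-type basis of $T_n(q)$ and $g e_\ell = q^{-\ell}e_\ell$. Thus $x$ acts on $P_\ell$ as a single nilpotent Jordan block of size $n$, using $x^n=0$ and $x^{n-1}e_\ell\neq 0$. Hence $P_\ell\lact\Bbbk\cong B(n)$ is free of rank $1 = \dim_\Bbbk\Bbbk$.

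Combining the two steps, $P_\ell\lact M$ is free of rank $\dim_\Bbbk M$. The rank can also be read off directly: $\dim_\Bbbk(P_\ell\lact M) = n\dim_\Bbbk M$, and a free $B(n)$-module of this dimension has rank $\dim_\Bbbk M$.
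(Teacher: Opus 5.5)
Your proof is correct and follows the paper's argument. For (i) you use the same reduction to the simple $B(n)$-module via a composition series and exactness of $P_\ell\lact -$, and you carry out the computation $P_\ell\lact\Bbbk\cong B(n)$ that the paper leaves to the reader; for (ii) you use the same computation of $\varepsilon\cdot(\bar e_\ell\otimes m)$. The only cosmetic difference is that the paper gets the comodule-algebra axioms for free by embedding $B(n)\subseteq T_n(q)$ via $\varepsilon\mapsto x$, so that $\lambda$ is just the restriction of $\Delta$, whereas you check well-definedness (via the $q$-binomial theorem), coassociativity and counitality by hand.
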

\begin{proof}
    Note that there is an embedding of algebras $B(n) \to T_n(q)$ defined by $\varepsilon \mapsto x$; the $T_n(q)$-comodule structure $\lambda$ on $B(n)$ is therefore simply the restriction of $\Delta$ to $B(n)$.
    It then follows from general theory that $\lmod{B(n)}$ is a $\cC_n$-module category.

    For \ref{item:P-action-truncatedpoly}, 
    one can calculate directly that $P_\ell \lact S \cong B(n)$ for the unique simple $B(n)$-module $S$; we leave this calculation to the reader.
    Now given $M \in \lmod{B(n)}$, consider the Jordan--H\"older filtration of $M$, whose simple subquotients are necessarily isomorphic to $S$.
    Applying the exact functor $P_\ell \lact -$ to this filtration, every short exact sequence in the resulting filtration has to split because $P_\ell\lact S$ is a projective $B(n)$-module.
    It then follows that $P_\ell \lact M \cong B(n)^{\oplus \dim_\Bbbk(M)}$, noting that the Jordan--H\"older length of $M$ equals its $\Bbbk$-dimension because $\dim_\Bbbk (S) = 1$.

    For \ref{item:S-action-truncatedpoly}, 
    let $M\in\lmod{B(n)}$ again.
    We have isomorphisms of vector spaces
    \begin{align*}
        \varphi_M\colon M &\xrightarrow{\sim} S_\ell \lact M; &&& 
        \psi_M\colon M &\xrightarrow{\sim} {_{\sigma^\ell}}B(n) \otimes_{B(n)} M; \\
        m &\mapsto \bar{e}_\ell \otimes m; &&&
        m &\mapsto 1 \otimes m.
    \end{align*}
    We claim that a natural isomorphism is given by 
    \begin{align*}
        \eta_M := \psi_M\varphi_M^{-1}\colon S_\ell \lact M \xrightarrow{\sim} {_{\sigma^\ell}}B(n) \otimes_{B(n)} M.
    \end{align*}
    The map $\eta_M$ is clearly natural in $M\in\lmod{B(n)}$.
    That $\eta_M$ is a homomorphism of modules over $B(n)$ follows from the following direct calculation using (\ref{eq:modulecatHopf}) and (\ref{eq:simpleTaftmodule}):
    The left actions of $B(n)$ on $S_\ell\lact M$ and ${_{\sigma^\ell}}B(n) \otimes_{B(n)} M$, respectively, are given by 
    \begin{align*}
        \varepsilon \cdot (\bar{e}_\ell \otimes m) 
        &= (x \cdot \bar{e}_\ell) \otimes (1 \cdot m) + (g \cdot \bar{e}_\ell) \otimes (\varepsilon \cdot m) 
        = q^{-\ell}\, \bar{e}_\ell \otimes (\varepsilon \cdot m); \\
        \varepsilon \cdot (1 \otimes m) 
        &= (\varepsilon \cdot 1) \otimes m 
        = \sigma^\ell(\varepsilon) \otimes m
        = q^{-\ell}\, \varepsilon \otimes m
        = q^{-\ell}\, 1 \otimes (\varepsilon \cdot m).
    \end{align*}
\end{proof}

\begin{Remark}
    The lemma above is most likely well-known among the experts of Hopf algebras.
    In fact, exact\footnote{A $\cC$-module category $\cN$ is said to be \emph{exact} if $P\lact N\in \proj(\cN)$ for all $P \in \proj(\cC)$.} $\cC_n$-module categories have been classified in \cite[Theorem 4.10]{EO_FTC} and \cite[\S 8.1]{Mombelli_uqsl2}, where $\lmod{B(n)}$ is part of the classification.
\end{Remark}

\subsection{Coactions of Taft algebras on $H(Q,R,n)$}

We now use the above to define a $\cC_n$-module category structure on $\lmod H$.

\begin{Proposition}\label{prop:Taftaction}
    The category $\lmod H$ is a $\cC_n$-module category
    such that for all $M \in \lmod H$:
    \begin{enumerate}
        \item \label{item:P-locally-free} $P\lact M$ is locally free for all $P \in \proj(\cC_n)$; and
        \item \label{item:S-isomorphic} for all $i \in Q_0$, $(S\lact M)_i \cong M_i$ for all simple $S \in \cC_n$.
    \end{enumerate}
\end{Proposition}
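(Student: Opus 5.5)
We will equip $H$ with the structure of a $T_n(q)$-comodule algebra and then read off both properties vertexwise from Lemma~\ref{lem:truncatedpoly-exact}.

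\emph{Step 1: the coaction.} Define $\lambda\colon \Bbbk Q^\varepsilon \to T_n(q)\otimes_\Bbbk H$ on generators by
\[
\lambda(e_i) = 1\otimes e_i,\qquad \lambda(\alpha) = g^{d_\alpha}\otimes \alpha \ (\alpha\in Q_1),\qquad \lambda(\varepsilon_i) = x\otimes e_i + g\otimes \varepsilon_i \ (i\in R_0),
\]
and $\lambda(\varepsilon_i)=0$ for $i\notin R_0$. The simplest choice is $d_\alpha=0$, and we first try this. Since $\Bbbk Q^\varepsilon$ is free on the quiver, $\lambda$ extends uniquely to an algebra map once we check it respects the idempotent relations. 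We then check that $\lambda$ kills the defining ideal. First, $\lambda(\varepsilon_i)^n = (x\otimes e_i+g\otimes\varepsilon_i)^n$, and by the $q$-binomial theorem (using $(g\otimes \varepsilon_i)(x\otimes e_i)=q\,(x\otimes e_i)(g\otimes\varepsilon_i)$) this equals $x^n\otimes e_i + g^n\otimes \varepsilon_i^n = 1\otimes\varepsilon_i^n = 0$, exactly as for $B(n)$. Second, for $\alpha\in R_1$,
\[
\lambda(\varepsilon_{t\alpha}\alpha-\alpha\varepsilon_{s\alpha}) = x\otimes(\alpha-\alpha) + g\otimes(\varepsilon_{t\alpha}\alpha-\alpha\varepsilon_{s\alpha}) = 0.
\]
For $\alpha\notin R_1$ no relation needs to be checked.

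Coassociativity and counitality only need to be verified on generators, where they reduce to those of $\Delta$ restricted to the copy of $B(n)\subset T_n(q)$, and to the grouplike elements $1$ and $g^{d_\alpha}$. So $\lmod H$ becomes a $\cC_n$-module category via \eqref{eq:modulecatHopf}.

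\emph{Step 2: vertexwise description.} Because $\lambda(e_i)=1\otimes e_i$, for $X\in\cC_n$ we get $e_i\cdot(X\lact M)=X\otimes M_i$, so $(X\lact M)_i = X\otimes_\Bbbk M_i$. For $i\in R_0$ the $\varepsilon_i$-action on this space is precisely the $\cC_n$-action of $X$ on the $B(n)$-module $M_i$ from Lemma~\ref{lem:truncatedpoly-exact}. Hence $(X\lact M)_i \cong X\lact M_i$ as $B(n)$-modules.

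\emph{Step 3: the two properties.} For \ref{item:P-locally-free}, take $P$ projective. By Lemma~\ref{lem:truncatedpoly-exact}\ref{item:P-action-truncatedpoly}, $P_\ell\lact M_i$ is free over $B(n)$ for $i\in R_0$, and by additivity the same holds for any $P\in\proj(\cC_n)$. For $i\notin R_0$ we have $H_i=\Bbbk$, so freeness is automatic. For \ref{item:S-isomorphic}, take $S=S_\ell$. For $i\in R_0$, Lemma~\ref{lem:truncatedpoly-exact}\ref{item:S-action-truncatedpoly} gives $(S_\ell\lact M)_i\cong {_{\sigma^\ell}}B(n)\otimes M_i$. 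It remains to show that twisting by $\sigma^\ell$ does not change the isomorphism class of $M_i$. A module over $B(n)$ is determined by the Jordan type of $\varepsilon$, and twisting replaces $\varepsilon$ by $q^{-\ell}\varepsilon$, which has the same Jordan type. Explicitly, on each Jordan block the map $v_k\mapsto q^{-\ell k}v_k$ is an isomorphism. For $i\notin R_0$, $(S\lact M)_i$ and $M_i$ have the same dimension, so they are isomorphic vector spaces.

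\emph{Expected obstacle.} The only genuinely delicate point is Step 1: confirming that $\lambda$ is compatible with all relations, and especially the $q$-binomial computation for $\varepsilon_i^n$. This computation also explains why the commutativity relations force the arrows in $R_1$ to be coinvariant, i.e.\ why we need $d_\alpha=0$ there. Step 3 is routine once Step 2 identifies the vertexwise structure.
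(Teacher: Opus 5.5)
Your proof is correct, but it takes a different route from the paper's proof. You write down the coaction $\lambda$ on $H$ and invoke the comodule-algebra construction \eqref{eq:modulecatHopf}; the paper mentions this coaction only in the Remark after the proposition. The paper's proof instead uses the species description of Remark~\ref{rem:H-mod}: it sets $(X\lact M)_i = X\lact M_i$ and $(X\lact M)_\alpha = X\lact M_\alpha$, and exactness follows because kernels and cokernels are computed vertexwise.

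The trade-off is as follows. Your route gets the associativity and unit constraints of the module category for free from coassociativity and counitality of $\lambda$. In exchange, you must check by hand that $\lambda$ kills the defining relations, which is your $q$-binomial computation. That computation can be shortcut: $x\otimes e_i+g\otimes\varepsilon_i$ is the image of $\Delta(x)$ under the multiplicative map $\id\otimes(\varepsilon^k\mapsto\varepsilon_i^k)$, so its $n$-th power is the image of $\Delta(x^n)=0$. The paper's route gets the relations for free. The $B(n)$-linearity of $X\lact M_\alpha$ for $\alpha\in R_1$ is just functoriality of $X\lact-$ on $\lmod{B(n)}$, and $\varepsilon_i^n=0$ is already built into Lemma~\ref{lem:truncatedpoly-exact}. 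The cost is that the paper leaves the coherence data implicit.

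Your Step 2 is essentially the check that the two constructions agree at each vertex. The arrows act by $\id_X\otimes M_\alpha$ in both, since $\lambda(\alpha)=1\otimes\alpha$.

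You also supply a step the paper passes over. Lemma~\ref{lem:truncatedpoly-exact}\ref{item:S-action-truncatedpoly} only gives $(S_\ell\lact M)_i\cong {_{\sigma^\ell}}B(n)\otimes_{B(n)}M_i$. Your Jordan-type argument upgrades this to an isomorphism $(S_\ell\lact M)_i\cong M_i$ of $B(n)$-modules, not merely of vector spaces. That is the form actually needed in Lemma~\ref{lem:simple_action_on_rigids} to place $S_\ell\lact M$ in the stratum $\mathcal{S}(\mathcal{O})$.
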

\begin{proof}
    Let us begin by defining the $\cC_n$-module category structure.
    We use the description of $H$-modules from Remark \ref{rem:H-mod} to define the $\cC_n$-module category structure on $\lmod H$ as follows.
    
    Let $\cF$ denote the forgetful functor 
    \[
    \cF: \cC_n = \lmod{T_n(q)} \to \lmod \Bbbk =\vec_\Bbbk.
    \]
    Note that $\lmod{H_i}$ is a $\cC_n$-module category for every $i\in Q_0$: 
    for $H_i = B(n)$ this is defined in Lemma~\ref{lem:truncatedpoly-exact}; 
    for $H_i = \Bbbk$ this is induced by the forgetful functor $\cF$ (see Remark \ref{rmk:two-natural-comodule-algebra}).
    Given $X \in \cC_n$, we shall abuse notation and use $X\lact -$ to denote the corresponding endofunctor of $\lmod{H_i}$ in both cases.
    Importantly, for $M_i \in \lmod{H_i}$ with $i\in R_0$, that is $H_i = B(n)$, note that we have
    equalities (not just isomorphisms) of vector spaces
    \[
    \cF(X\lact M_i) = X\lact \cF(M_i) = X\otimes_\Bbbk\cF(M_i).
    \]
    
    Given $X \in \cC_n$ and $M \in \lmod H$, we define $X \lact M$ to be the $H$-module given by
    \[
    (X\lact M)_i \coloneqq X \lact M_i, \quad (X\lact M)_\alpha \coloneqq X\lact M_\alpha.
    \]
    for all $i\in Q_0$ and $\alpha\in Q_1$.
    Note that $X\lact M_\alpha$ is viewed in $\vec_\Bbbk$ unless $\alpha\in R_1$, in which case $X\lact M_\alpha$ is viewed in $\lmod{B(n)}$.
    From this it follows immediately that $X\lact -: \lmod H \to \lmod H$ and $- \lact M: \cC_n \to \lmod H$ are both exact functors (since kernels and cokernels are computed component-wise), hence defines a $\cC_n$-module category structure on $\lmod H$.

    Now \ref{item:P-locally-free} follows from the definition of the module category structure on $\lmod{H}$ and Lemma~\ref{lem:truncatedpoly-exact}\ref{item:P-action-truncatedpoly}, where we obtain that $(P\lact M)_i = P\lact M_i$ is free for any $P \in \proj(\cC_n)$ (when $i \notin R_0$, $H_i=\Bbbk$ and so all modules are free).
    Similarly, for \ref{item:S-isomorphic} we have $(S\lact M)_i = S \lact M_i \cong M_i$ for any simple $S \in \cC_n$ by Lemma~\ref{lem:truncatedpoly-exact}\ref{item:S-action-truncatedpoly} (when $i \notin R_0$, $H_i=\Bbbk$ and the action of simples is simply $S \lact M_i = \Bbbk \otimes_\Bbbk M_i \cong M_i$.)
\end{proof}

\begin{Remark}
    The module category structure on $\lmod{H}$ over $\cC_n=\lmod{T_n(q)}$ above can also be obtained from the following coaction of $T_n(q)$ on $H$ (cf.\ Lemma \ref{lem:truncatedpoly-exact}):
    \[
    \lambda(e_i) = 1\otimes e_i, \quad \lambda(\varepsilon_i) = x\otimes e_i +g\otimes \varepsilon_i, \quad \lambda(\alpha)=1\otimes \alpha,
    \]
    for all $i\in Q_0$ and $\alpha \in Q_1$.
    Nonetheless, we find the categorical perspective of Proposition \ref{prop:Taftaction} conceptually clearer for the properties desired. 
\end{Remark}

The following example illustrates how simple modules in $\cC_n$ act on $H$-modules:
\begin{example}
    Take $H = \dot{H}(Q,R_0,n)$ with $Q\colon 1 \xleftarrow{\alpha} 2$, $R_0 = \{1,2\}$ and $n=3$, thus $q\in \Bbbk$ denotes a primitive $3$rd root of unity.
    Consider a locally free $H$-module $M$ of the form
    \[\begin{tikzcd}[ampersand replacement = \&]
        M\colon \&\&
        \Bbbk^3 
        \ar[loop left,"{\begin{psmallmatrix} 0 & 0 & 0 \\ 1 & 0 & 0 \\ 0 & 1 & 0 \end{psmallmatrix}}"] 
        \&\&\&
        \Bbbk^3
        \ar[lll,"{\begin{psmallmatrix} x_{11} & x_{12} & x_{13} \\ x_{21} & x_{22} & x_{23} \\ x_{31} & x_{32} & x_{33} \end{psmallmatrix}}"']
        \ar[loop right,"{\begin{psmallmatrix} 0 & 0 & 0 \\ 1 & 0 & 0 \\ 0 & 1 & 0 \end{psmallmatrix}}"] 
    \end{tikzcd}\]
    with $x_{ij}\in\Bbbk$ for all $1\leq i,j \leq 3$.
    We have the following canonical isomorphism $S_1\lact M \cong N$ of $H$-modules:
    \[\begin{tikzcd}[ampersand replacement = \&]
        S_1\lact M\colon \&\&
        \Bbbk^3 
        \ar[loop left,"{\begin{psmallmatrix} 0 & 0 & 0 \\ q^2 & 0 & 0 \\ 0 & q^2 & 0 \end{psmallmatrix}}"] 
        \ar[ddd,"{\begin{psmallmatrix} 1 & 0 & 0 \\ 0 & q & 0 \\ 0 & 0 & q^2 \end{psmallmatrix}}"']
        \&\&\&
        \Bbbk^3
        \ar[lll,"{\begin{psmallmatrix} x_{11} & x_{12} & x_{13} \\ x_{21} & x_{22} & x_{23} \\ x_{31} & x_{32} & x_{33} \end{psmallmatrix}}"']
        \ar[loop right,"{\begin{psmallmatrix} 0 & 0 & 0 \\ q^2 & 0 & 0 \\ 0 & q^2 & 0 \end{psmallmatrix}}"] 
        \ar[ddd,"{\begin{psmallmatrix} 1 & 0 & 0 \\ 0 & q & 0 \\ 0 & 0 & q^2 \end{psmallmatrix}}"]
        \\ \\ \\ 
        N\colon \&\&
        \Bbbk^3 
        \ar[loop left,"{\begin{psmallmatrix} 0 & 0 & 0 \\ 1 & 0 & 0 \\ 0 & 1 & 0 \end{psmallmatrix}}"] 
        \&\&\&
        \Bbbk^3
        \ar[lll,"{\begin{psmallmatrix} x_{11} & q^2 x_{12} & q x_{13} \\ q x_{21} & x_{22} & q^2 x_{23} \\ q^2 x_{31} & q x_{32} & x_{33} \end{psmallmatrix}}"']
        \ar[loop right,"{\begin{psmallmatrix} 0 & 0 & 0 \\ 1 & 0 & 0 \\ 0 & 1 & 0 \end{psmallmatrix}}"] 
    \end{tikzcd}\]
\end{example}

\begin{Remark}
    To the best of our knowledge, (co)actions of Taft algebras on finite dimensional algebras have mostly been focusing on classifying the (co)actions; see e.g.\ \cite{KW_Hopf-path} for path algebras, \cite{EO_FTC,Mombelli_uqsl2} for self-injective algebras, and \cite{CY-Taft_action_fda} for some general results.
    The point of our work is to show that these actions can be used to study the representation theory of the algebras (co)acted upon.
\end{Remark}

\section{Proof of the main theorem}

From here on we fix the $\cC_n$-module category structure on $\cM\coloneqq \lmod H$ as in Proposition \ref{prop:Taftaction}.
Recall the subcategory $\cP\cM \subseteq \cM$ defined in Definition \ref{defn:PM}.
We can now state our main theorem.
\begin{Theorem}\label{thm:mainthm}
    Let $\cM \coloneqq  \lmod H$.
    For $M \in \cM$ a rigid module, $P\lact M$ contains $M$ as a summand for any non-zero $P \in \proj(\cC_n)$.
    In particular, all rigid $H$-modules are in $\cP\cM$, and hence are $\tau$-locally free.
\end{Theorem}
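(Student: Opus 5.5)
We first reduce to the indecomposable projectives $P_\ell$, and then run the argument for $P_\ell$ through its radical filtration.

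\textbf{Reduction to $P_\ell$.} Any non-zero $P\in\proj(\cC_n)$ has some $P_\ell$ as a direct summand, and $-\lact M$ is additive. So it suffices to show that $M$ is a direct summand of $P_\ell\lact M$ for each $0\le \ell<n$.

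\textbf{Structure of $P_\ell$.} In $T_n(q)$ the projective $P_\ell=T_n(q)e_\ell$ is uniserial of length $n$, with basis $x^ke_\ell$ for $0\le k<n$. Its radical layers are the simples $S_\ell, S_{\ell+1},\dots,S_{\ell+n-1}$, up to the sign convention on indices; in particular every simple occurs exactly once. Since $-\lact M$ is exact, $P_\ell\lact M$ is filtered by $\rad^k(P_\ell)\lact M$, with subquotients $S_{\ell+k}\lact M$. By Proposition~\ref{prop:Taftaction}\ref{item:S-isomorphic} and Lemma~\ref{lem:truncatedpoly-exact}\ref{item:S-action-truncatedpoly}, these are twists of $M$: each $S_j\lact -$ is an autoequivalence of $\cM$, with inverse $S_{-j}\lact-$, that rescales the loops $\varepsilon_i$ by $q^{-j}$.

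\textbf{Key step for rigid $M$.} The crucial claim, in the spirit of Lemma~\ref{lem:simple_action_on_rigids}, is that $S_j\lact M\cong M$ whenever $M$ is rigid. For this I would use the one-parameter family of algebra automorphisms $\sigma_t$ of $H$ with $\varepsilon_i\mapsto t\varepsilon_i$, fixing $e_i$ and the arrows, for $t\in\Bbbk^\times$; this is well defined because the relations are homogeneous in the loops. The twists ${}_{\sigma_t}M$ form an irreducible family in the module variety through $M$. Since $M$ is rigid, its orbit is open by Voigt's lemma, hence ${}_{\sigma_t}M\cong M$ for generic $t$. The isomorphism classes ${}_{\sigma_t}M\cong M$ form a subgroup of $\Bbbk^\times$ containing a nonempty open set, so the subgroup is all of $\Bbbk^\times$, and in particular contains $t=q^{-j}$.

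\textbf{Splitting the filtration.} Given the key step, all subquotients of the filtration are isomorphic to $M$, and $\Ext^1(M,M)=0$. Hence every short exact sequence in the filtration splits (the analogue of Lemma~\ref{lem:orbit-vanishing-Ext-splits}). Inducting up the filtration gives $P_\ell\lact M\cong M^{\oplus n}$, so $M$ is a summand.

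\textbf{Consequences.} It follows that $M\in\cP\cM$. By Theorem~\ref{thm:ARtranslate-closed}, $\tau^k M\in\cP\cM$ for all $k$. Every object of $\cP\cM$ is a summand of some $P\lact N$, which is locally free by Proposition~\ref{prop:Taftaction}\ref{item:P-locally-free}. Summands of free $H_i$-modules are projective, hence free over the local algebra $H_i$. So $\tau^kM$ is locally free for all $k$, that is, $M$ is $\tau$-locally free.

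\textbf{Main obstacle.} The hard step is establishing $S_j\lact M\cong M$ for rigid $M$. A characteristic-free alternative to the geometric argument is also available: the derivation $\delta$ of $H$ given by $\varepsilon_i\mapsto\varepsilon_i$ (zero on $e_i$ and arrows) yields a class in $\Ext^1(M,M)$. When that class vanishes, one obtains a linear map $\phi$ on $M$ with $[\phi,\varepsilon_i]=\varepsilon_i$ and $\phi$ commuting with the arrows. From this, an explicit isomorphism $M\cong {}_{\sigma_t}M$ can be built, for example by exponentiating or by using the eigenspace grading of $\phi$. I would try the grading argument first. The geometric openness argument is the fallback; it needs care with the subgroup/openness claim.
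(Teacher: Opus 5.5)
Your proof is correct, and its architecture matches the paper's. You first show $S_j\lact M\cong M$ for rigid $M$. You then split a composition-series filtration of $P\lact M$ using $\Ext^1(M,M)=0$; your radical-filtration step is Lemma~\ref{lem:orbit-vanishing-Ext-splits}, and the reduction to indecomposable $P_\ell$ is harmless but unnecessary. Finally you conclude via Theorem~\ref{thm:ARtranslate-closed} and Proposition~\ref{prop:Taftaction}\ref{item:P-locally-free}. Your last step, arguing that every object of $\cP\cM$ is locally free, is in fact spelled out more carefully than in the paper.

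Where you differ is in the proof of the key lemma.
\begin{itemize}
\item The paper uses only the coarse fact $(S_\ell\lact M)_i\cong M_i$. It works in the stratum $\mathcal{S}(\mathcal{O})$ of all points of $\rep(H,\mathbf{d})$ whose restriction to $\bar H$ lies in a fixed orbit. Irreducibility of this stratum, plus openness of both orbits, forces them to meet. This actually proves more: two rigid $H$-modules with isomorphic restrictions to $\bar H$ are isomorphic, independently of the Taft action.
\item You use the finer fact that $S_j\lact M$ is literally the twist ${}_{\sigma_{q^{-j}}}M$. This is correct by the construction in Proposition~\ref{prop:Taftaction}, since arrows act by $\id_{S_j}\otimes M_\alpha$. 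The twist comes from the connected group $\Bbbk^\times$ of automorphisms of $H$, so only irreducibility of $\Bbbk^\times$ is needed. The argument shows that a rigid module over any algebra with a rational $\Bbbk^\times$-action is isomorphic to all its twists.
\end{itemize}
Your subgroup step needs no extra care. The set of good $t$ is the preimage of an open orbit under a morphism $\Bbbk^\times\to\rep(H,\mathbf{d})$. It is therefore a cofinite subgroup of the infinite group $\Bbbk^\times$, hence all of it. Alternatively, the $\Bbbk^\times\times\GL(\mathbf{d})$-orbit of $M$ is irreducible and contains the two open orbits $\mathcal{O}(M)$ and $\mathcal{O}({}_{\sigma_t}M)$.

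One correction to your closing paragraph: you have the roles of main argument and fallback reversed. The derivation/eigenspace argument is \emph{not} characteristic-free, whereas the geometric one is, since Voigt's lemma holds in every characteristic.
\begin{itemize}
\item In characteristic $p>0$, the relation $[\phi,\varepsilon_i]=\varepsilon_i$ only moves generalized eigenvalues of $\phi$ within cosets $\lambda+\mathbb{F}_p$.
\item Scalars $c_\lambda$ on eigenspaces with $c_{\lambda+1}=t\,c_\lambda$ can then be forced around a cycle to satisfy $t^p=1$. Because $p\nmid n$, this excludes $t=q^{-j}\neq 1$.
\item Concrete example: take $M=B(3)$ in characteristic $2$ with $\phi(\varepsilon^k)=k\varepsilon^k$. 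Both $1$ and $\varepsilon^2$ lie in the $0$-eigenspace, which forces $t^2=1$, so the construction fails even though $M\cong{}_{\sigma_t}M$.
\item Conceptually, the derivation only sees $\mathrm{Lie}(\mathbb{G}_m)$, which in characteristic $p$ is also $\mathrm{Lie}(\mu_p)$. Exponentiating $\phi$ is likewise unavailable algebraically.
\end{itemize}
So that alternative works in characteristic $0$ only, and the geometric argument you wrote out should be the primary one.
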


Before we prove our theorem, we need two lemmas.
\begin{Lemma}\label{lem:orbit-vanishing-Ext-splits}
    Let $\cM$ be a $\cC$-module category for a finite tensor category $\cC$, and let $M \in \cM$ and $X \in \cC$.
    Denote by $JH(X)$ the multiset of simple objects appearing in the Jordan--H\"older filtration of $X$, so that $JH(X)$ has number of elements equal to the length of $X$.
    Suppose that $\Ext^1_\cM(S\lact M,S'\lact M) =0$ for all $S,S' \in JH(X)$.
    Then
    \[
    X \lact M \cong \bigoplus_{S \in JH(X)} S\lact M.
    \]
\end{Lemma}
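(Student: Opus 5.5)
We argue by induction on the length of $X$ in $\cC$, using that $-\lact M\colon \cC \to \cM$ is exact (Definition~\ref{defn:modulecat}). If $X$ has length $0$ or $1$ there is nothing to prove, since then $X=0$ or $X$ is itself simple and $JH(X)=\{X\}$.

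For the inductive step, choose a short exact sequence $0 \to X' \to X \to S_0 \to 0$ in $\cC$ with $S_0$ simple; for instance, take $X'$ to be a maximal proper subobject of $X$. As multisets we have $JH(X) = JH(X') \sqcup \{S_0\}$. Hence the hypothesis that $\Ext^1_\cM(S\lact M, S'\lact M)=0$ for all $S,S'\in JH(X)$ holds in particular for all $S,S'\in JH(X')$. The induction hypothesis therefore gives
\[
X'\lact M \cong \bigoplus_{S\in JH(X')} S\lact M.
\]
Applying the exact functor $-\lact M$ to the chosen sequence yields a short exact sequence
\[
0 \to X'\lact M \to X\lact M \to S_0\lact M \to 0
\]
in $\cM$. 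Its class lies in $\Ext^1_\cM(S_0\lact M, X'\lact M)$. Since $\Ext^1$ is additive in its second variable, this group decomposes as
\[
\bigoplus_{S\in JH(X')} \Ext^1_\cM(S_0\lact M, S\lact M).
\]
Each summand vanishes by hypothesis, because both $S_0$ and $S$ lie in $JH(X)$. So the sequence splits, and $X\lact M \cong (X'\lact M)\oplus (S_0\lact M)$, which gives the claimed decomposition.

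There is no real obstacle here. The only points that need care are, first, that the multiset $JH(X)$ is well defined (by Jordan--H\"older in the finite abelian category $\cC$) and splits compatibly along the chosen subobject. Second, $\Ext^1$ commutes with finite direct sums in $\cM$, which holds because $\cM$ is a finite abelian category. Note also that the hypothesis is used with $S=S'$ allowed, which corresponds to self-extensions of $S\lact M$ when a simple occurs more than once.
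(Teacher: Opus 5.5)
Your proof is correct and follows the paper's argument: induction on the length of $X$, using exactness of $-\lact M$ and the $\Ext^1$-vanishing to split the resulting short exact sequence. The only cosmetic difference is that the paper peels off a simple subobject $S'\subseteq X$ and splits $0\to S'\lact M\to X\lact M\to X'\lact M\to 0$, whereas you peel off a simple quotient; this does not matter because the hypothesis is symmetric in $S,S'$.
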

\begin{proof}
    We induct on the length of $X$.
    The base case of length $1$ is immediate.
    Now suppose $X$ has length at least $2$, and consider $S' \subsetneq X$ a simple submodule of $X$, so that we get an exact sequence
    \[
    0 \to S' \to X \to X' \to 0.
    \]
    By construction, $JH(X)= JH(X') \sqcup \{S'\}$.
    In particular, the length of $X'$ is strictly less than that of $X$, and $JH(X') \subset JH(X)$, so that we can apply the induction hypothesis to obtain
    \[
    X'\lact M \cong \bigoplus_{S'' \in JH(X')} S''\lact M.
    \]
    By exactness of $-\lact M$, the exact sequence above gives
    \[
    0 \to S'\lact M \to X\lact M \to X'\lact M \to 0.
    \]
    Replacing $X'\lact M$ with the direct sum above and applying the vanishing of $\Ext^1_{\cM}(S\lact M, S'\lact M)$, we get that $X\lact M \cong \bigoplus_{S \in JH(X)} S\lact M$ as required.
\end{proof}

\begin{Lemma}\label{lem:simple_action_on_rigids}
    Let $M \in \lmod{H}$ be a rigid module.
    Then $S_\ell\lact M \cong M$ for all $0\leq \ell < n$.
\end{Lemma}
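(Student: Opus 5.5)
The plan is to realise $S_\ell\lact M$ as a twist of $M$ by an algebra automorphism of $H$ that lies in a one-parameter torus $\Bbbk^\times$ of automorphisms. We then use the geometric fact that rigid modules have open orbits to show that $M$ is invariant under the whole torus.

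\emph{Step 1 (identify $S_\ell\lact M$ as a twist).} For $t\in\Bbbk^\times$, let $\sigma_t\colon H\to H$ fix all $e_i$ and all arrows $\alpha\in Q_1$, and send $\varepsilon_i\mapsto t\varepsilon_i$. This is a well-defined automorphism, because all defining relations ($\varepsilon_i$ for $i\notin R_0$, $\varepsilon_j^n$, and $\varepsilon_{t(\alpha)}\alpha-\alpha\varepsilon_{s(\alpha)}$) are homogeneous in the loops. Write $M_t$ for $M$ with the action twisted by $\sigma_t$, so $M_t$ has the same spaces and arrow maps and $\varepsilon_i$ acting by $t\,M_{\varepsilon_i}$. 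By construction in Proposition~\ref{prop:Taftaction}, $(S_\ell\lact M)_\alpha = S_\ell\lact M_\alpha=\id_{S_\ell}\otimes M_\alpha$. The vertexwise maps $\eta_{M_i}$ of Lemma~\ref{lem:truncatedpoly-exact}\ref{item:S-action-truncatedpoly} are, on underlying spaces, just $\bar e_\ell\otimes m\mapsto m$. Hence they commute with the arrow maps and assemble into an isomorphism $S_\ell\lact M\cong M_{q^{-\ell}}$ of $H$-modules. For $i\notin R_0$ the component is simply $\Bbbk\otimes_\Bbbk M_i\cong M_i$. So it suffices to show $M_t\cong M$ for all $t\in\Bbbk^\times$.

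\emph{Step 2 (the stabiliser is a subgroup).} Let $G=\{t\in\Bbbk^\times : M_t\cong M\}$. Since $\sigma_s\sigma_t=\sigma_{st}$ and twisting by an automorphism preserves isomorphisms, we have $(M_t)_s=M_{st}$. Therefore $G$ is a subgroup of $\Bbbk^\times$.

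\emph{Step 3 (the stabiliser is cofinite, via openness of the orbit).} Fix $d=\dim M$ and consider the affine module variety $\mod(H,d)$ with its $\GL_d$-action. Since $\Ext^1_H(M,M)=0$ and $\Bbbk$ is algebraically closed, Voigt's lemma shows that the orbit $\mathcal{O}_M$ is open in $\mod(H,d)$. The map $\Bbbk^\times\to\mod(H,d)$, $t\mapsto M_t$ (rescale the matrices of the loops), is a morphism of varieties. The preimage of $\mathcal O_M$ is therefore open in $\Bbbk^\times$, and it is nonempty since it contains $1$. A nonempty open subset of the irreducible curve $\Bbbk^\times$ is cofinite, so $G$ is cofinite.

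\emph{Step 4 (conclude).} A cofinite subgroup $G$ of the infinite group $\Bbbk^\times$ is everything. If $t\notin G$, then $tG$ and $G$ are disjoint, yet both are cofinite in an infinite set, which is impossible. Hence $M_{q^{-\ell}}\cong M$, and Step 1 gives $S_\ell\lact M\cong M$.

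I expect the main obstacle to be Step 3. It needs the standard geometric input (openness of orbits of modules with $\Ext^1(M,M)=0$) together with the verification that $t\mapsto M_t$ is a morphism landing in $\mod(H,d)$ for the fixed algebra $H$. The latter holds because the relations are homogeneous. The bookkeeping in Step 1, checking that the isomorphism of Lemma~\ref{lem:truncatedpoly-exact}\ref{item:S-action-truncatedpoly} is compatible with the arrows, is routine.
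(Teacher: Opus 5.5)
Your proof is correct, but it takes a different route from the paper's. The paper never identifies $S_\ell\lact M$ explicitly. It works in $\rep(H,{\bf d})$ and stratifies by the isomorphism class of the restriction to $\bar H=\prod_i H_i$. The strata $\mathcal{S}(\mathcal{O})=\pi^{-1}(\mathcal{O})$ are irreducible, since the fibres of $\pi$ are linear. The paper uses only Proposition~\ref{prop:Taftaction}\ref{item:S-isomorphic} to see that $M$ and $S_\ell\lact M$ lie in the same stratum. Both modules are rigid, so both orbits are open by Voigt's lemma, and they must meet inside the irreducible stratum.

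You instead realise $S_\ell\lact -$ as twisting by the loop-rescaling automorphism $\sigma_{q^{-\ell}}$. Your Step~1 computation is right; it is a global version of Lemma~\ref{lem:truncatedpoly-exact}\ref{item:S-action-truncatedpoly}. You then embed this twist in the connected family $\{\sigma_t\}_{t\in\Bbbk^\times}$. After that you need only openness of the single orbit $\mathcal{O}_M$, together with the fact that $\{t : M_t\cong M\}$ is an open subgroup of the connected group $\Bbbk^\times$, hence everything.

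Your approach buys several things. It bypasses the stratification and its irreducibility. It does not use rigidity of $S_\ell\lact M$. It gives the stronger conclusion that a rigid module is isomorphic to all of its twists $M_t$, $t\in\Bbbk^\times$. It is an instance of the general principle that rigid modules are stable under twisting by a connected algebraic family of automorphisms. The paper's argument, in exchange, needs only the coarse fact that simples of $\cC_n$ preserve vertexwise isomorphism classes. It does not require realising their action by automorphisms of $H$ in a connected family.
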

\begin{proof}
    For a dimension vector ${\bf d} = (d_i)_{i\in Q_0} \in \mathbb{N}^{Q_0}$, consider the affine variety
    \begin{align*}
        \rep(H,{\bf d}) \subseteq \prod_{\alpha\in  Q^\varepsilon_1} \Bbbk^{d_{t(\alpha)}\times d_{s(\alpha)}}
    \end{align*}
    whose elements $M = (M_\alpha)_{\alpha\in Q^\varepsilon_1} \in \rep(H,{\bf d})$ satisfy the matrix equations
    \begin{align*}
        M_{\varepsilon_i} = M_{\varepsilon_j}^n = M_{\varepsilon_{t(\alpha)}}M_{\alpha} - M_{\alpha}M_{\varepsilon_{s(\alpha)}} = 0
    \end{align*}
    for all $i\in Q_0\setminus R_0$, $j\in R_0$ and $\alpha\in R_1$.
    Note that every element of $\rep(H,{\bf d})$ defines an $H$-module with dimension vector ${\bf d}$, and every $H$-module with dimension vector ${\bf d}$ is isomorphic to a module arising this way. By some abuse of notation, we identify points of $\rep(H,{\bf d})$ and $H$-modules with dimension vector ${\bf d}$.
    The group $\GL({\bf d}) \coloneqq \prod_{i\in Q_0} \GL(d_i)$ acts on $\rep(H,{\bf d})$ via conjugation such that $\GL({\bf d})$-orbits correspond to isomorphism classes of $H$-modules. Given $M\in\rep(H,{\bf d})$, write $\mathcal{O}(M) \coloneqq \GL({\bf d}).M$ for the $\GL({\bf d})$-orbit of $M$ in $\rep(H,{\bf d})$. 
    Set $\bar{H} := \prod_{i\in Q_0} H_i$, and consider the canonical projection $\pi\colon \rep(H,{\bf d}) \to \rep(\bar{H},{\bf d})$.
    Following \cite[Section 5.3]{BobinskiSchröer}, we consider the stratification 
    \begin{align*}
        \rep(H,{\bf d}) = \bigsqcup_{\mathcal{O}} \mathcal{S}(\mathcal{O})
        &&\text{ with }&&
        \mathcal{S}(\mathcal{O}) := \bigcup_{\bar{M} \in \mathcal{O}} \pi^{-1}(\bar{M}) 
    \end{align*}
    where the disjoint union is taken over all $\GL({\bf d})$-orbits $\mathcal{O}$ in $\rep(\bar{H},{\bf d})$. It is routine to see that $\mathcal{S}(\mathcal{O})$
    is an irreducible locally closed subset of $\rep(H,{\bf d})$ for every $\GL({\bf d})$-orbit $\mathcal{O}$ in $\rep(\bar{H},{\bf d})$.

    Let $M \in \lmod{H}$ be a rigid module.
    Set ${\bf d} := (\dim_\Bbbk M_i)_{i\in Q_0}$ and $\mathcal{O} \coloneqq \mathcal{O}(\pi(M)) \subseteq \rep(\bar{H},{\bf d})$.
    By Proposition~\ref{prop:Taftaction}\ref{item:S-isomorphic}, we have that $M' \coloneqq S_\ell\lact M \in \mathcal{S}(\mathcal{O})$.
    Since $M$ is rigid and $S_\ell\lact -$ is an autoequivalence, $M'$ is rigid.
    By Voigt's Lemma both orbits $\mathcal{O}(M)$ and $\mathcal{O}(M')$ are open subsets of $\rep(H,{\bf d})$.
    Thus, their non-empty intersections with $\mathcal{S}(\mathcal{O})$ are two open subsets of $\mathcal{S}(\mathcal{O})$. 
    By irreducibility of the stratum $\mathcal{S}(\mathcal{O})$, the orbits $\mathcal{O}(M)$ and $\mathcal{O}(M')$ must intersect non-trivially. This proves $M'\cong M$.
\end{proof}

\begin{proof}[Proof of Theorem~\ref{thm:mainthm}]
    Let $M \in \cM = \lmod H$ be a rigid module.
    By Lemma~\ref{lem:simple_action_on_rigids}, we have $S_\ell \lact M \cong M$ for all $0\leq \ell < n$.
    In particular, $\Ext^1_\cM(S_\ell \lact M , S_{\ell'} \lact M) \cong \Ext^1(M,M) = 0$ for all $0\leq \ell,\ell' < n$.
    Take any non-zero $P \in \proj(\cC_n)$.
    By Lemma~\ref{lem:orbit-vanishing-Ext-splits}, we get that $P \lact M \cong M^{\oplus \dim_\Bbbk (P)}$, hence $M \in \cP\cM$ by definition.
    
    By Proposition~\ref{prop:Taftaction}, we get that $P\lact M$ is locally free, hence its summand $M$ is also locally free (note that every projective $H_i$-module is free).
    Combined with Theorem~\ref{thm:ARtranslate-closed}, we get that $M$ is moreover $\tau$-locally free, as required.
\end{proof}

The following example shows that $\cP\cM$ does not always contain all rigid modules for a general $\cC$-module category $\cM$.
\begin{Example}\label{eg:rigidnotinPM}
    Consider $\cC = \lmod{T_2(q)} = \cM$.
    Then it is known that all indecomposable modules are rigid: $T_2(q)$ is just the preprojective algebra of type $A_2$. 
    In particular, the simple modules -- which are not projective -- are rigid.
    However, we have that $\cP\cM=\proj(\cM)$; this just says that $\cC$ is an exact $\cC$-module category, which is true for all finite tensor categories $\cC$. 
    Indeed, for any $X \in \cC$ the functor $-\lact X = -\otimes X$ has an exact right adjoint $-\otimes \leftdual{X}$, hence sends projectives to projectives. 
\end{Example}

\printbibliography

@incollection {Ringel_ex-seq-Artin,
    AUTHOR = {Ringel, Claus Michael},
     TITLE = {The braid group action on the set of exceptional sequences of
              a hereditary {A}rtin algebra},
 BOOKTITLE = {Abelian group theory and related topics ({O}berwolfach, 1993)},
    SERIES = {Contemp. Math.},
    VOLUME = {171},
     PAGES = {339--352},
 PUBLISHER = {Amer. Math. Soc., Providence, RI},
      YEAR = {1994},
      ISBN = {0-8218-5178-0},
   MRCLASS = {16E60 (16G20 16G70 20F36)},
  MRNUMBER = {1293154},
MRREVIEWER = {Dieter\ Happel},
       DOI = {10.1090/conm/171/01786},
       URL = {https://doi.org/10.1090/conm/171/01786},
}

@inproceedings {CB_ex-seq-quiver,
    AUTHOR = {Crawley-Boevey, William},
     TITLE = {Exceptional sequences of representations of quivers},
 BOOKTITLE = {Proceedings of the {S}ixth {I}nternational {C}onference on
              {R}epresentations of {A}lgebras ({O}ttawa, {ON}, 1992)},
    SERIES = {Carleton-Ottawa Math. Lecture Note Ser.},
    VOLUME = {14},
     PAGES = {7},
 PUBLISHER = {Carleton Univ., Ottawa, ON},
      YEAR = {1992},
   MRCLASS = {16G20},
  MRNUMBER = {1206935},
MRREVIEWER = {Peter\ Dr\"axler},
}

@article {CY-Taft_action_fda,
    AUTHOR = {Centrone, Lucio and Yasumura, Felipe},
     TITLE = {Actions of {T}aft's algebras on finite dimensional algebras},
   JOURNAL = {J. Algebra},
  FJOURNAL = {Journal of Algebra},
    VOLUME = {560},
      YEAR = {2020},
     PAGES = {725--744},
      ISSN = {0021-8693,1090-266X},
   MRCLASS = {16T05 (16W50)},
  MRNUMBER = {4111735},
MRREVIEWER = {Gast\'on\ Andr\'es\ Garc\'ia},
       DOI = {10.1016/j.jalgebra.2020.06.007},
       URL = {https://doi.org/10.1016/j.jalgebra.2020.06.007},
}

@article {KW_Hopf-path,
    AUTHOR = {Kinser, Ryan and Walton, Chelsea},
     TITLE = {Actions of some pointed {H}opf algebras on path algebras of
              quivers},
   JOURNAL = {Algebra Number Theory},
  FJOURNAL = {Algebra \& Number Theory},
    VOLUME = {10},
      YEAR = {2016},
    NUMBER = {1},
     PAGES = {117--154},
      ISSN = {1937-0652,1944-7833},
   MRCLASS = {16T05 (05C20 16G20)},
  MRNUMBER = {3474843},
MRREVIEWER = {Sarah\ J.\ Witherspoon},
       DOI = {10.2140/ant.2016.10.117},
       URL = {https://doi.org/10.2140/ant.2016.10.117},
}

@misc{CH_eqAuscor-findim,
      title={A subcategory with remarkable homological properties via actions of tensor categories}, 
      author={Kevin Coulembier and Edmund Heng},
      year={2026},
      eprint={2610.17646},
      archivePrefix={arXiv},
      primaryClass={math.RT},
      url={https://arxiv.org/abs/2610.17646}, 
}

@article {Kulshammer_prospecies,
    AUTHOR = {K\"ulshammer, Julian},
     TITLE = {Pro-species of algebras {I}: {B}asic properties},
   JOURNAL = {Algebr. Represent. Theory},
  FJOURNAL = {Algebras and Representation Theory},
    VOLUME = {20},
      YEAR = {2017},
    NUMBER = {5},
     PAGES = {1215--1238},
      ISSN = {1386-923X,1572-9079},
   MRCLASS = {16G10 (16G20)},
  MRNUMBER = {3707912},
MRREVIEWER = {Nadiya\ Gubareni},
       DOI = {10.1007/s10468-017-9683-2},
       URL = {https://doi.org/10.1007/s10468-017-9683-2},
}

@article {AM_modulecatHopf,
    AUTHOR = {Andruskiewitsch, Nicol\'as and Mombelli, Juan Mart\'in},
     TITLE = {On module categories over finite-dimensional {H}opf algebras},
   JOURNAL = {J. Algebra},
  FJOURNAL = {Journal of Algebra},
    VOLUME = {314},
      YEAR = {2007},
    NUMBER = {1},
     PAGES = {383--418},
      ISSN = {0021-8693,1090-266X},
   MRCLASS = {16W30},
  MRNUMBER = {2331768},
MRREVIEWER = {\c Serban\ Raianu},
       DOI = {10.1016/j.jalgebra.2007.04.006},
       URL = {https://doi.org/10.1016/j.jalgebra.2007.04.006},
}

@article {EO_FTC,
    AUTHOR = {Etingof, Pavel and Ostrik, Viktor},
     TITLE = {Finite tensor categories},
   JOURNAL = {Mosc. Math. J.},
  FJOURNAL = {Moscow Mathematical Journal},
    VOLUME = {4},
      YEAR = {2004},
    NUMBER = {3},
     PAGES = {627--654, 782--783},
      ISSN = {1609-3321,1609-4514},
   MRCLASS = {18D10 (16W30)},
  MRNUMBER = {2119143},
MRREVIEWER = {Sebastian\ Burciu},
       DOI = {10.17323/1609-4514-2004-4-3-627-654},
       URL = {https://doi.org/10.17323/1609-4514-2004-4-3-627-654},
}

@article {Mombelli_uqsl2,
    AUTHOR = {Mombelli, Mart\'in},
     TITLE = {Module categories over pointed {H}opf algebras},
   JOURNAL = {Math. Z.},
  FJOURNAL = {Mathematische Zeitschrift},
    VOLUME = {266},
      YEAR = {2010},
    NUMBER = {2},
     PAGES = {319--344},
      ISSN = {0025-5874,1432-1823},
   MRCLASS = {16T05 (18D10 19D23)},
  MRNUMBER = {2678630},
MRREVIEWER = {V.\ A.\ Artamonov},
       DOI = {10.1007/s00209-009-0571-2},
       URL = {https://doi.org/10.1007/s00209-009-0571-2},
}

@article {GLS_rigidSchur,
    AUTHOR = {Gei\ss, Christof and Leclerc, Bernard and Schr\"oer, Jan},
     TITLE = {Rigid modules and {S}chur roots},
   JOURNAL = {Math. Z.},
  FJOURNAL = {Mathematische Zeitschrift},
    VOLUME = {295},
      YEAR = {2020},
    NUMBER = {3-4},
     PAGES = {1245--1277},
      ISSN = {0025-5874,1432-1823},
   MRCLASS = {16G20 (16D90 16E60)},
  MRNUMBER = {4125687},
MRREVIEWER = {Alireza\ Nasr-Isfahani},
       DOI = {10.1007/s00209-019-02396-5},
       URL = {https://doi.org/10.1007/s00209-019-02396-5},
}

@article {GLS_quiversI,
    AUTHOR = {Geiss, Christof and Leclerc, Bernard and Schr\"oer, Jan},
     TITLE = {Quivers with relations for symmetrizable {C}artan matrices
              {I}: {F}oundations},
   JOURNAL = {Invent. Math.},
  FJOURNAL = {Inventiones Mathematicae},
    VOLUME = {209},
      YEAR = {2017},
    NUMBER = {1},
     PAGES = {61--158},
      ISSN = {0020-9910,1432-1297},
   MRCLASS = {16G10 (16G20 16G70)},
  MRNUMBER = {3660306},
MRREVIEWER = {Intan\ Muchtadi-Alamsyah},
       DOI = {10.1007/s00222-016-0705-1},
       URL = {https://doi.org/10.1007/s00222-016-0705-1},
}

@book {EGNO,
    AUTHOR = {Etingof, Pavel and Gelaki, Shlomo and Nikshych, Dmitri and
              Ostrik, Victor},
     TITLE = {Tensor categories},
    SERIES = {Mathematical Surveys and Monographs},
    VOLUME = {205},
 PUBLISHER = {American Mathematical Society, Providence, RI},
      YEAR = {2015},
     PAGES = {xvi+343},
      ISBN = {978-1-4704-2024-6},
   MRCLASS = {18D10 (16T05)},
  MRNUMBER = {3242743},
MRREVIEWER = {Julien\ Bichon},
       DOI = {10.1090/surv/205},
       URL = {https://doi.org/10.1090/surv/205},
}

@article {BobinskiSchröer,
    AUTHOR = {Bobi\'nski, Grzegorz and Schr\"oer, Jan},
     TITLE = {Algebras with irreducible module varieties {I}},
   JOURNAL = {Adv. Math.},
  FJOURNAL = {Advances in Mathematics},
    VOLUME = {343},
      YEAR = {2019},
     PAGES = {624--639},
      ISSN = {0001-8708,1090-2082},
   MRCLASS = {16G10 (16G20)},
  MRNUMBER = {3883216},
MRREVIEWER = {Wolfgang\ Rump},
       DOI = {10.1016/j.aim.2018.11.024},
       URL = {https://doi.org/10.1016/j.aim.2018.11.024},
}

@article {HuberyKrause,
    AUTHOR = {Hubery, Andrew and Krause, Henning},
     TITLE = {A categorification of non-crossing partitions},
   JOURNAL = {J. Eur. Math. Soc. (JEMS)},
  FJOURNAL = {Journal of the European Mathematical Society (JEMS)},
    VOLUME = {18},
      YEAR = {2016},
    NUMBER = {10},
     PAGES = {2273--2313},
      ISSN = {1435-9855,1435-9863},
   MRCLASS = {17B22 (05A17 05E15 16E20 16E60 20F55)},
  MRNUMBER = {3551191},
MRREVIEWER = {Volodymyr\ Mazorchuk},
       DOI = {10.4171/JEMS/641},
       URL = {https://doi.org/10.4171/JEMS/641},
}

@article {Pfeifer,
    AUTHOR = {Pfeifer, Calvin},
     TITLE = {A generic classification of locally free representations of
              affine {GLS} algebras},
   JOURNAL = {J. Algebra},
  FJOURNAL = {Journal of Algebra},
    VOLUME = {664},
      YEAR = {2025},
     PAGES = {414--467},
      ISSN = {0021-8693,1090-266X},
   MRCLASS = {16G10 (14L30 16G20 16G60 16G70)},
  MRNUMBER = {4813554},
MRREVIEWER = {Zhonghua\ Zhao},
       DOI = {10.1016/j.jalgebra.2024.09.013},
       URL = {https://doi.org/10.1016/j.jalgebra.2024.09.013},
}

\end{document}